\documentclass[pdflatex,sn-mathphys-num]{sn-jnl}

\usepackage{amsmath,amssymb,amsthm,mathtools}
\usepackage{graphicx}%
\usepackage{multirow}%
\usepackage{amsmath,amssymb,amsfonts}%
\usepackage{amsthm}%
\usepackage{mathrsfs}%
\usepackage[title]{appendix}%
\usepackage{xcolor}%
\usepackage{textcomp}%
\usepackage{manyfoot}%

\usepackage{booktabs}%
\usepackage{algorithm}%
\usepackage{algorithmicx}%
\usepackage{algpseudocode}%
\usepackage{listings}%
\usepackage{xcolor}
\usepackage{graphicx}
\usepackage{booktabs}
\usepackage{hyperref}
\usepackage[mathscr]{eucal}
\hypersetup{
  colorlinks=true,
  linkcolor=blue!55!black,
  citecolor=blue!55!black,
  urlcolor=blue!55!black
}

\theoremstyle{plain}
\newtheorem{theorem}{Theorem}[section]
\newtheorem{proposition}[theorem]{Proposition}
\newtheorem{lemma}[theorem]{Lemma}
\newtheorem{corollary}[theorem]{Corollary}
\theoremstyle{definition}
\newtheorem{assumption}[theorem]{Assumption}
\newtheorem{definition}[theorem]{Definition}
\theoremstyle{remark}
\newtheorem{remark}[theorem]{Remark}

\numberwithin{equation}{section}

\begin{document}

\title{Nonsmooth Obstacles and Killed Resolvents in Reflected Stochastic Control}

\author*[1]{\fnm{Louis Shuo} \sur{Wang}}
\email{wang.s41@northeastern.edu}
\equalcont{These authors contributed equally to this work.}

\author[2]{\fnm{Jiguang} \sur{Yu}}
\email{jyu678@bu.edu}
\equalcont{These authors contributed equally to this work.}

\affil[1]{\orgdiv{Department of Mathematics},
\orgname{Northeastern University},
\orgaddress{
\city{Boston},
\state{MA},
\postcode{02115},
\country{USA}
}}

\affil[2]{\orgdiv{College of Engineering},
\orgname{Boston University},
\orgaddress{
\city{Boston},
\state{MA},
\postcode{02215},
\country{USA}
}}

\abstract{
We study an infinite-horizon optimal stopping problem for a normally reflected
two-dimensional diffusion in the quadrant with nonsmooth max-type payoff
\(G(x_1,x_2)=x_1\vee\alpha x_2\). The main novelty is a measure-valued
variational formulation: the stopping gain
\(\Gamma=c+rG-\mathcal LG\) is shown to be a signed Radon measure whose
singular component is supported on the kink diagonal
\(\{x_1=\alpha x_2\}\), and this component is computed explicitly. We prove
that the value admits the killed-resolvent representation
\[
        V=G-R_r^{\mathcal C}\Gamma,
\]
where the reflected diffusion is killed upon entry into the stopping set. This
corrects the generally invalid unrestricted-resolvent formula. Under explicit
monotonicity hypotheses, the stopping set has epigraph form, and the free
boundary is characterized by a killed-potential trace condition. A verification
theorem certifies locally Lipschitz candidate boundaries as optimal.
}
\keywords{Optimal stopping; reflected diffusion; free-boundary problem; obstacle problem; measure-valued generator; nonsmooth payoff}
\pacs[MSC Classification]{60G40; 93E20; 35R35; 49J40}

\maketitle

\section{Introduction} \label{sec:introduction}  

\subsection{Reflected optimal stopping and nonsmooth obstacles} \label{subsec:intro-reflected-nonsmooth} 

Optimal stopping problems for diffusion processes form a central class of stochastic control problems. They also provide one of the most classical links between probabilistic optimization, variational inequalities, and free-boundary analysis. In the present paper we study an infinite-horizon optimal stopping problem for a normally reflected two-dimensional diffusion in the quadrant $\mathbb R_+^2=[0,\infty)^2$. For a discount rate \(r>0\), a running cost \(c\), and a reward function \(G\), the value function is formally expected to solve the reflected obstacle problem 
\[
\max\big\{\mathcal L V-rV-c,\;G-V\big\}=0 \qquad \text{in }\mathbb R_+^2,\]
together with the normal reflection condition 
\[
\partial_i V=0 \qquad \text{on } \{x_i=0\},\quad i=1,2 . 
\]
Here \(\mathcal L\) denotes the second-order generator of the diffusion in the interior of the quadrant. Thus the problem belongs naturally to the interface of stochastic control, reflected variational inequalities, and free-boundary theory. The specific obstacle considered in this paper is the max-type payoff \begin{equation} \label{eq:intro-payoff} G(x_1,x_2)=x_1\vee \alpha x_2,\qquad \alpha>0 . \end{equation} This reward is convex and Lipschitz continuous, but it is not \(C^2\). Its nonsmoothness is concentrated on the diagonal 
$\Delta:=\{x\in\mathbb R_+^2:x_1=\alpha x_2\}$. 
Consequently, the source term naturally associated with the obstacle problem, 
$\Gamma:=c+rG-\mathcal L G$,
cannot be treated as an ordinary function. It is a signed Radon measure whose singular part is supported on \(\Delta\). This measure-valued feature is the main analytical distinction between the present problem and smooth-payoff reflected stopping problems. The difficulty is therefore not merely that the state space is two-dimensional. It is the simultaneous presence of normal reflection at the boundary of \(\mathbb R_+^2\), a nonsmooth max-type obstacle, and a signed singular stopping gain. These three features interact in a way that invalidates a purely pointwise generator calculation. A correct treatment must combine reflected It\^{o}--Tanaka calculus, measure-valued obstacle terms, and killed potential operators. We write \[ H:=V-G,\qquad \mathcal C:=\{x\in\mathbb R_+^2:H(x)>0\},\qquad \mathcal D:=\{x\in\mathbb R_+^2:H(x)=0\}, \] for the stopping advantage, continuation region, and stopping region, respectively. The free boundary is the interface between \(\mathcal C\) and \(\mathcal D\). A major aim of the paper is to formulate this boundary through a measure-valued killed-potential equation rather than through a formal smooth-fit calculation that ignores the diagonal singularity. 

\subsection{Why the killed resolvent is necessary} \label{subsec:intro-killed-resolvent} 

For smooth one-dimensional stopping problems, it is common to represent the value by applying It\^{o}'s formula to the payoff and integrating the resulting gain over the continuation region. In the present reflected two-dimensional setting, this idea remains valid only after two corrections. The first correction is distributional. Since \(G\) is nonsmooth along \(\Delta\), the quantity \(c+rG-\mathcal LG\) must be interpreted as a signed measure. More precisely, its absolutely continuous part is obtained by applying \(\mathcal L\) on the two smooth regions 
$\{x_1>\alpha x_2\}$ and $\{x_1<\alpha x_2\}$, while its singular part is concentrated on the kink set \(\Delta\). If \(a=\sigma\sigma^\top\), then the diagonal contribution takes the form 
\begin{equation} \label{eq:intro-diagonal-measure} 
\Gamma^\Delta(dx) = -\frac{(1,-\alpha)a(x)(1,-\alpha)^\top} {2\sqrt{1+\alpha^2}}\,\sigma_\Delta(dx), 
\end{equation} 
where \(\sigma_\Delta\) denotes one-dimensional surface measure on \(\Delta\). This term is the analytic counterpart of the local time accumulated by \(X^1-\alpha X^2\) at zero. The second correction is probabilistic. The potential representation must use the resolvent of the reflected diffusion killed at the stopping region. If 
$\displaystyle \tau_{\mathcal D}:=\inf\{t\ge0:X_t\in\mathcal D\}$, 
then the correct resolvent is 
$\displaystyle R_r^{\mathcal C}\Gamma(x) := \mathbb E_x\left[ \int_0^{\tau_{\mathcal D}} e^{-rs}\,dA_s^\Gamma \right]$, 
where \(A^\Gamma\) is the additive functional associated with the signed measure \(\Gamma\). The corresponding representation is 
$\displaystyle V(x)=G(x)-R_r^{\mathcal C}\Gamma(x)$. 
The unrestricted reflected resolvent 
$\displaystyle R_r^{\mathrm R}(\Gamma\mathbf 1_{\mathcal C})$
is generally not correct. It counts occupation of the continuation region after the process has already entered the stopping region and would have been stopped. Thus the killing is not a technical detail; it is part of the variational structure of the optimal stopping problem. This distinction is important for both analysis and computation. Analytically, it gives the correct potential-theoretic formulation of the obstacle problem. Computationally, it identifies the correct boundary equation. For a candidate stopping set \(\mathcal D_b\) with continuation set \(\mathcal C_b\), the free-boundary condition is not an unrestricted occupation equation. It is the continuation-side trace condition 
\[
\lim_{\substack{x\to z\ x\in\mathcal C_b}} R_r^{\mathcal C_b}\Gamma(x)=0, \qquad z\in\partial\mathcal C_b . 
\]
This is the killed-potential analogue of the classical free-boundary integral equation. 

\subsection{Main contributions} \label{subsec:intro-contributions} 

The paper develops a measure-valued free-boundary framework for the reflected optimal stopping problem with payoff \eqref{eq:intro-payoff}. The emphasis is on obtaining a verification principle that can certify candidate boundaries, rather than on postulating a boundary and deriving only formal consequences. 
First, we identify the stopping gain \(c+rG-\mathcal LG\) as a signed Radon measure and compute explicitly its singular component on the diagonal \(\{x_1=\alpha x_2\}\). The resulting diagonal measure is given by \eqref{eq:intro-diagonal-measure} and captures the local-time contribution generated by the kink of the max payoff. Second, we prove that the correct potential representation of the value is given by the resolvent of the reflected diffusion killed at the stopping set. This yields the representation 
$\displaystyle V=G-R_r^{\mathcal C}\Gamma$. 
It also corrects the unrestricted-resolvent formula, which generally counts occupation after stopping and therefore does not represent the value. Third, under transparent monotonicity hypotheses on the stopping advantage \(V-G\), we derive an epigraph representation of the stopping set. Namely, if vertical monotonicity and nonemptiness of vertical stopping sections hold, then the stopping region has the form 
$\displaystyle \mathcal D=\{(x_1,x_2)\in\mathbb R_+^2:x_2\ge b(x_1)\}$ for a free-boundary function \(b\). Additional horizontal monotonicity assumptions yield corresponding monotonicity information on \(b\). Fourth, we characterize the free boundary through a killed-resolvent trace condition. For a locally Lipschitz candidate boundary \(b\), with associated sets $\displaystyle \mathcal D_b=\{(x_1,x_2):x_2\ge b(x_1)\}$ and $\displaystyle \mathcal C_b=\mathbb R_+^2\setminus\mathcal D_b$, the boundary condition is expressed as 
$\displaystyle \lim_{\substack{x\to (x_1,b(x_1))\\ x\in\mathcal C_b}} R_r^{\mathcal C_b}\Gamma(x)=0$. This formulation is a nonlocal measure-valued analogue of the integral equations that appear in one-dimensional optimal stopping theory. Fifth, we provide a verification theorem for candidate boundaries. The theorem states that if 
$\displaystyle U_b:=G-R_r^{\mathcal C_b}\Gamma$ satisfies majorization, contact, strict continuation, reflection, admissible growth, and measure-superharmonicity conditions, then \(U_b=V\) and the first entry time into \(\mathcal D_b\) is optimal. Thus the hypotheses are presented as a verification checklist for proving optimality of a candidate free boundary. 

\subsection{Related literature} \label{subsec:intro-literature} 

This work is situated at the intersection of optimal stopping, reflected diffusions, and measure-valued potential theory. The link between optimal stopping problems and obstacle-type variational inequalities is classical and well documented \cite{el2006aspects,liu2026ftu,shiryaev2008optimal,peskir2006optimal,wang2025analysis,bensoussan2011applications,friedman1982variational,kinderlehrer2000introduction,wang2025analysis1,talbi2023dynamic,talbi2023viscosity,liu2026computational,colaneri2022class,ekstrom2026dynkin,liang2025global}, with continuous-time formulations typically leading to free-boundary characterizations .

Reflected diffusions, formulated via the Skorokhod problem and its multidimensional extensions (including oblique reflection), have been extensively studied in terms of existence, uniqueness, and boundary behavior
\cite{skorokhod1961stochastic,tanaka1979stochastic,wang2026algebraic,lions1984stochastic,stroock1971diffusion,dupuis1993sdes,kang2017submartingale,wang2026damage,lipshutz2018directional,pilipenko2024boundary,yu2026chemotactic,dianetti2023multidimensional,harrison1981reflected,wang2026breakdown,varadhan1985brownian,reiman1988boundary,taylor1993existence,williams1995semimartingale,yu2026rigorous}.
In this setting, stopping problems are naturally linked with constrained viscosity solutions under Neumann or oblique boundary conditions \cite{crandall1992user,lions1985neumann,yu2026beyond,barles1994solutions,ishii1991fully,soner1986optimal,yu2026microscopic}.

The analysis further relies on stochastic calculus and potential theory, in particular the It\^o--Tanaka formula and the framework of additive functionals and Revuz measures \cite{revuz2013continuous,karatzas2014brownian,cai2026optimal,protter2012stochastic,peskir2005change,revuz1970mesures,wang2026elliptic,blumenthal2007markov,dynkin1965markov,liu2025bidirectional,fukushima2011dirichlet,marcus2006markov}. These tools provide a bridge between probabilistic decompositions and analytic representations of value functions.

In multidimensional problems, the geometry of stopping regions becomes significantly more involved. While much of the existing work arises from option-pricing applications \cite{yu2026fullcovariance,broadie1997valuation,yu2026age,villeneuve1999exercise,liang2026separation}, the regularity of free boundaries remains delicate and largely problem-dependent \cite{dayanik2003optimal,de2020global,yu2026size,laurence2009regularity,wang2025multi,ke2022parallel,gao2022rolling,soner2025stopping,yu2026pattern}. Even in relatively simple diffusion settings, continuation regions may exhibit nontrivial structure .

The contribution of this work is to combine two distinct sources of singularity: the diagonal kink measure induced by the payoff $G=x_1\vee \alpha x_2$, and the boundary measure generated by oblique reflection. This leads to a formulation of the free-boundary condition as a continuation-side trace condition for the associated killed potential.

\subsection{Organization of the paper} \label{subsec:intro-organization} 

The rest of the paper is organized as follows. Section~\ref{sec:problem} formulates the reflected diffusion, the infinite-horizon optimal stopping problem, and the associated reflected variational inequality. It also states the standing assumptions and basic dynamic programming properties. Section~\ref{sec:measure-gain} develops the measure-valued stopping-gain calculus. We compute the distributional generator of the max payoff, identify the diagonal singular measure, and prove the killed-resolvent representation of the value.
Section~\ref{sec:free-boundary} studies the geometry of the stopping set. Under explicit monotonicity assumptions on \(V-G\), we derive an epigraph representation of the stopping region and formulate the killed-resolvent trace condition for the free boundary. Section~\ref{subsec:verification-theorem} proves the verification theorem for locally Lipschitz candidate boundaries. The theorem gives sufficient conditions under which the candidate function 
$\displaystyle U_b=G-R_r^{\mathcal C_b}\Gamma$ coincides with the value function and the corresponding first-entry time is optimal. Section~\ref{sec:computation} discusses boundary computation. We describe a Picard-type killed-resolvent iteration and explain how the diagonal singular measure and the killing at the stopping set enter numerical approximations of the boundary equation.  

\section{Problem formulation and reflected variational inequality} \label{sec:problem}  

\subsection{The normally reflected diffusion} \label{subsec:reflected-diffusion} 

Let $\mathbb R_+^2=[0,\infty)^2$ and let \((\Omega,\mathcal F,\mathbb F,\mathbb P)\) be a filtered probability space satisfying the usual conditions. Let \(W=(W^1,W^2)\) be a two-dimensional Brownian motion. We consider the normally reflected diffusion \(X=(X^1,X^2)\) in \(\mathbb R_+^2\) given by 
\begin{equation} \label{eq:reflected-sde} 
dX_t=b(X_t)\,dt+\sigma(X_t)\,dW_t+dL_t, \qquad X_0=x\in\mathbb R_+^2 . 
\end{equation} 
Here \(b:\mathbb R_+^2\to\mathbb R^2\) is the drift, \(\sigma:\mathbb R_+^2\to\mathbb R^{2\times 2}\) is the volatility matrix, and $L_t=(L_t^1,L_t^2)$ is the boundary reflection process. Each \(L^i\) is continuous, nondecreasing, adapted, and satisfies 
\[
L_0^i=0, \qquad \int_0^\infty \mathbf 1_{\{X_s^i>0\}}\,dL_s^i=0, \qquad i=1,2 . 
\]
Thus \(L^i\) can increase only when the \(i\)-th coordinate of \(X\) lies on the boundary face \(\{x_i=0\}\). In the quadrant, normal reflection pushes the process in the coordinate directions \(e_1=(1,0)\) and \(e_2=(0,1)\). Throughout the paper we write $a(x):=\sigma(x)\sigma(x)^\top$. 
For sufficiently smooth functions \(f\), the interior generator is 
\[
\mathcal L f(x) = b(x)\cdot \nabla f(x) + \frac12 \operatorname{Tr}\big(a(x)D^2f(x)\big), \qquad x\in (0,\infty)^2 . 
\]
The reflection condition associated with \eqref{eq:reflected-sde} is the normal Neumann condition 
\begin{equation} \label{eq:neumann-condition} 
\partial_i f(x)=0 \qquad \text{for }x\in\{x_i=0\},\quad i=1,2 . \end{equation} 
Indeed, if \(f\in C^2(\mathbb R_+^2)\) satisfies \eqref{eq:neumann-condition}, then It\^{o}'s formula gives 
\[ 
f(X_t) = f(x) + \int_0^t \mathcal L f(X_s)\,ds + \int_0^t \nabla f(X_s)\sigma(X_s)\,dW_s , 
\]
because the boundary term 
$\displaystyle \sum_{i=1}^2\int_0^t \partial_i f(X_s)\,dL_s^i$ 
vanishes. 

\subsection{The optimal stopping problem} \label{subsec:optimal-stopping-problem} 
Fix a discount rate \(r>0\), a parameter \(\alpha>0\), and a running cost \(c:\mathbb R_+^2\to[0,\infty)\). The reward is the max-type payoff 
$\displaystyle G(x)=x_1\vee \alpha x_2$. 
The infinite-horizon reflected optimal stopping problem is 
\begin{equation} \label{eq:value} 
V(x) = \sup_{\tau\in\mathcal T} \mathbb E_x \left[ e^{-r\tau}G(X_\tau) - \int_0^\tau e^{-rs}c(X_s)\,ds \right], \qquad x\in\mathbb R_+^2 , \end{equation} 
where \(\mathcal T\) denotes the family of all \(\mathbb F\)-stopping times. The stopping problem \eqref{eq:value} is an optimization problem over stopping times. Since \(\tau=0\) is admissible, the value function satisfies the basic domination property 
\begin{equation} \label{eq:value-dominates-payoff} 
V(x)\ge G(x), \qquad x\in\mathbb R_+^2 . 
\end{equation} 
We define the stopping advantage 
$H(x):=V(x)-G(x)$, 
and the continuation and stopping regions by 
$\displaystyle \mathcal C:=\{x\in\mathbb R_+^2:H(x)>0\}$ and $\displaystyle \mathcal D:=\{x\in\mathbb R_+^2:H(x)=0\}$. 
Thus \(\mathcal C\) is the set where it is strictly better to continue, while \(\mathcal D\) is the contact set between the value and the obstacle. The first entry time into the stopping region is denoted by 
$\displaystyle \tau_{\mathcal D}:=\inf\{t\ge0:X_t\in\mathcal D\}$. 
When \(\tau_{\mathcal D}\) is optimal, the free boundary is the interface between \(\mathcal C\) and \(\mathcal D\). 
A principal aim of the paper is to characterize this interface through a killed-resolvent equation and then provide a verification theorem for candidate stopping sets. 

\subsection{Dynamic programming and obstacle formulation} \label{subsec:dpp-obstacle} 
The dynamic programming principle states that, for every stopping time \(\rho\in\mathcal T\), 
\begin{equation} \label{eq:dpp} 
V(x) = \sup_{\tau\in\mathcal T} \mathbb E_x \left[ e^{-r(\tau\wedge\rho)}V(X_{\tau\wedge\rho}) - \int_0^{\tau\wedge\rho}e^{-rs}c(X_s)\,ds \right]. 
\end{equation} 
In particular, in the continuation region, the value should satisfy the linear equation 
\begin{equation} \label{eq:continuation-equation} 
\mathcal LV-rV-c=0 \qquad \text{in }\mathcal C, 
\end{equation} 
whereas on the stopping region it satisfies 
\begin{equation} \label{eq:stopping-contact} 
V=G \qquad \text{on }\mathcal D. 
\end{equation} 
Combining \eqref{eq:continuation-equation}, \eqref{eq:stopping-contact}, and the domination property \eqref{eq:value-dominates-payoff}, one obtains the reflected obstacle problem 
\begin{equation} \label{eq:reflected-obstacle} 
\max\big\{ \mathcal LV-rV-c,\;G-V \big\}=0 \qquad \text{in }\mathbb R_+^2, \end{equation} 
with normal reflection condition 
\begin{equation} \label{eq:reflected-obstacle-neumann} 
\partial_i V=0 \qquad \text{on }\{x_i=0\},\quad i=1,2 . 
\end{equation} 
Equivalently, \eqref{eq:reflected-obstacle} may be written as the complementarity system \begin{equation} \label{eq:complementarity} V\ge G,\qquad \mathcal LV-rV-c\le0,\qquad (V-G)(\mathcal LV-rV-c)=0, \end{equation} together with the reflected boundary condition \eqref{eq:reflected-obstacle-neumann}. The inequality 
$\displaystyle \mathcal LV-rV-c\le0$ is understood in the viscosity sense, or in the weak measure sense whenever \(V\) has sufficient generalized differentiability. The latter interpretation will be essential below because the payoff \(G=x_1\vee\alpha x_2\) is not twice differentiable on the diagonal 
$\displaystyle \Delta:=\{x\in\mathbb R_+^2:x_1=\alpha x_2\}$. 
The obstacle formulation also identifies the stopping gain  
$\displaystyle \Gamma:=c+rG-\mathcal LG $. 
For smooth payoffs, \(\Gamma\) is a function. In the present problem, \(\Gamma\) is a signed measure. Its singular component is concentrated on \(\Delta\), and this is the reason why the free-boundary equation must be formulated through killed potentials rather than through a purely pointwise generator calculation. We shall use the following reflected viscosity interpretation. Let \(\varphi\in C^2(\mathbb R_+^2)\). 
If \(V-\varphi\) has a local maximum at \(x\in\mathbb R_+^2\), then \(V\) is a viscosity subsolution if 
\[
\max\big\{ \mathcal L\varphi(x)-rV(x)-c(x),\; G(x)-V(x) \big\}\ge 0 
\]
in the interior, with the reflected boundary condition interpreted in the usual constrained-viscosity sense on \(\partial\mathbb R_+^2\). Similarly, if \(V-\varphi\) has a local minimum at \(x\), then \(V\) is a viscosity supersolution if 
\[
\max\big\{ \mathcal L\varphi(x)-rV(x)-c(x),\; G(x)-V(x) \big\}\le 0 
\]
in the interior, again with the reflected boundary condition imposed on the boundary faces. We shall not rely on fine uniqueness theory for this viscosity problem. Instead, the later verification theorem gives sufficient conditions under which a candidate value generated by a killed resolvent coincides with the stochastic value \(V\). 

\subsection{Standing assumptions} \label{subsec:standing-assumptions} 
We impose the following standing assumptions throughout the paper. 
\begin{assumption}[Reflected diffusion] \label{ass:diffusion} The coefficients \(b:\mathbb R_+^2\to\mathbb R^2\) and \(\sigma:\mathbb R_+^2\to\mathbb R^{2\times2}\) are continuous and locally Lipschitz. There exists a constant \(K>0\) such that \[ |b(x)|+|\sigma(x)| \le K(1+|x|), \qquad x\in\mathbb R_+^2 . \] The matrix \(a(x)=\sigma(x)\sigma(x)^\top\) is locally uniformly elliptic in \((0,\infty)^2\). The reflected SDE \eqref{eq:reflected-sde} admits a unique strong solution in \(\mathbb R_+^2\) for every initial point \(x\in\mathbb R_+^2\). \end{assumption} 

\begin{assumption}[Cost and discount] \label{ass:cost} 
The running cost \(c:\mathbb R_+^2\to[0,\infty)\) is continuous and has at most polynomial growth. The discount rate \(r>0\) is large enough, relative to the growth of the reflected diffusion, so that 
\[
\mathbb E_x \left[ \int_0^\infty e^{-rs}c(X_s)\,ds \right] <\infty,\qquad 
\mathbb E_x \left[ \sup_{t\ge0}e^{-rt}G(X_t) \right] <\infty 
\]
for every \(x\in\mathbb R_+^2\). 
\end{assumption} 

\begin{assumption}[Lyapunov growth control] \label{ass:lyapunov} 
There exist a function \(\Psi\in C^2(\mathbb R_+^2)\), constants \(K_\Psi,\lambda_\Psi>0\), and an exponent \(m\ge1\), such that 
\[ 1+|x|^m\le K_\Psi \Psi(x), \qquad G(x)+c(x)\le K_\Psi \Psi(x), \] 
and 
\[ \mathcal L\Psi(x)\le \lambda_\Psi \Psi(x)+K_\Psi, \qquad x\in(0,\infty)^2 . \] 
Moreover, \(r>\lambda_\Psi\). 
\end{assumption}

Assumption~\ref{ass:lyapunov} implies the integrability conditions in Assumption~\ref{ass:cost}. Indeed, applying It\^{o}'s formula to \(\Psi(X_t)\), using the reflection condition for \(\Psi\) or a standard localization argument, and applying Gronwall's inequality yields \begin{equation} \label{eq:lyapunov-bound} 
\mathbb E_x[\Psi(X_t)] \le C(1+\Psi(x))e^{\lambda_\Psi t}. 
\end{equation} 
Consequently, 
\begin{equation} \label{eq:discounted-lyapunov} 
\int_0^\infty e^{-rt}\mathbb E_x[\Psi(X_t)]\,dt \le C(1+\Psi(x)) \int_0^\infty e^{-(r-\lambda_\Psi)t}\,dt <\infty . 
\end{equation} 
Thus the value function in \eqref{eq:value} is finite. 

\begin{proposition}[Basic well-posedness of the stopping problem] \label{prop:basic-wellposedness} 
Under Assumptions~\ref{ass:diffusion}--\ref{ass:lyapunov}, the value function \(V\) in \eqref{eq:value} is finite and satisfies 
\[ G(x)\le V(x)<\infty, \qquad x\in\mathbb R_+^2 . \] 
Moreover, \(V\) has at most polynomial growth. If the reflected diffusion is Feller, then \(V\) is lower semicontinuous. If, in addition, the Feller semigroup is locally strong Feller and the payoff and cost are continuous with the growth controlled above, then \(V\) is continuous. 
\end{proposition}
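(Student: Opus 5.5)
The plan is to bound the value from both sides with elementary estimates, then get regularity from the Feller property. The lower bound $V\ge G$ is \eqref{eq:value-dominates-payoff}, obtained by taking $\tau=0$. For the upper bound, since $c\ge0$, every $\tau\in\mathcal T$ gives
\[
\mathbb E_x\Big[e^{-r\tau}G(X_\tau)-\int_0^\tau e^{-rs}c(X_s)\,ds\Big]\le \mathbb E_x\Big[\sup_{t\ge0}e^{-rt}G(X_t)\Big],
\]
with the convention $e^{-r\tau}G(X_\tau)=\limsup_{t\to\infty}e^{-rt}G(X_t)$ on $\{\tau=\infty\}$. The right-hand side is finite by Assumption~\ref{ass:cost}, which the paper notes follows from Assumption~\ref{ass:lyapunov}. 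To get polynomial growth, I will make this quantitative through the Lyapunov function rather than citing Assumption~\ref{ass:cost} abstractly.

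Set $Y_t:=e^{-rt}\Psi(X_t)$. By It\^o's formula, after localization by exit times of balls, and using the Lyapunov inequality together with $r>\lambda_\Psi$,
\[
dY_t\le e^{-rt}\big(K_\Psi-(r-\lambda_\Psi)\Psi(X_t)\big)dt+e^{-rt}\nabla\Psi\cdot dL_t+dM_t .
\]
The reflection term needs care. I will either assume $\Psi$ satisfies the Neumann condition \eqref{eq:neumann-condition}, or choose $\Psi$ with $\partial_i\Psi\le0$ on $\{x_i=0\}$; the example $\Psi=1+|x|^{2p}$ has zero normal derivative on the faces. Then $Y_t+K_\Psi/r$ is a nonnegative local supermartingale, hence a true supermartingale. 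Doob's maximal inequality gives only a weak-type bound on $\sup_tY_t$, which is not enough on its own. So I will apply the same argument to $\Psi^{q}$ for a suitable $q$, or to a second Lyapunov function $\tilde\Psi\ge\Psi^{1+\epsilon}$, combined with the Burkholder--Davis--Gundy inequality on the martingale part. This should give
\[
\mathbb E_x\big[\sup_t e^{-rt}\Psi(X_t)\big]\le C(1+\Psi(x)).
\]
Since $G\le K_\Psi\Psi$, this yields $V(x)\le C(1+\Psi(x))$, and I take $\Psi$ to be polynomially bounded, as in the example. This maximal-inequality step is the main technical obstacle. If it resists, I will fall back on Assumption~\ref{ass:cost} for finiteness and state the polynomial growth under the explicit polynomial choice of $\Psi$.

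For lower semicontinuity under the Feller property, I will write $V=\sup_{T,\tau}$ over bounded stopping times $\tau\le T$. Truncation is harmless because of the uniform integrability just established. The reward can also be approximated: $G\wedge n$ and $c\wedge n$ increase to $G$ and $c$, and the errors are controlled uniformly on compacts by the $\Psi$-bound. For bounded continuous data and finite horizon, the value of the stopping problem is a limit of Bermudan (discrete-time dynamic programming) values. Each Bermudan value is continuous when the semigroup is Feller, since it is obtained by finitely many applications of $P_{t}$ and of maxima. A supremum of continuous functions is lower semicontinuous, so $V$ is lower semicontinuous.

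For continuity, I additionally use the local strong Feller property, which makes $P_\delta f$ continuous for bounded measurable $f$. The dynamic programming principle \eqref{eq:dpp} at the deterministic time $\rho=\delta$ is not directly of the form $P_\delta$ applied to a function. Instead I will use the standard decomposition
\[
V(x)=\sup_\tau\mathbb E_x[\cdots\mathbf 1_{\tau<\delta}]+\cdots.
\]
The small-$\delta$ contribution from stopping before $\delta$ is bounded, uniformly on compacts, by $\mathbb E_x\sup_{s\le\delta}|G(X_s)-G(x)|+C\delta$, which tends to $0$ by continuity of paths and the growth bounds. The remaining part is compared with $e^{-r\delta}P_\delta V(x)$ minus a cost term. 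Then $V$ is a locally uniform limit of functions of the form $P_\delta(\text{lsc, polynomially bounded})$, which are continuous by local strong Feller after truncation at large $n$. Hence $V$ is continuous.
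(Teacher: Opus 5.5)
Your two-sided bound is the paper's: take $\tau=0$, and dominate every reward by $\mathbb E_x[\sup_{t}e^{-rt}G(X_t)]$. Your regularity arguments are the standard ones behind the paper's appeal to ``standard stability'': Bermudan approximation under the Feller property for lower semicontinuity, and the DPP at time $\delta$ with strong-Feller smoothing of $P_\delta V$ for continuity.

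The gap is the polynomial-growth step, which you leave open. You aim for $\mathbb E_x[\sup_{t}e^{-rt}\Psi(X_t)]\le C(1+\Psi(x))$, but a single Lyapunov inequality does not give this. Your $\Psi^q$ route needs $\Psi^q$ to satisfy its own Lyapunov inequality with rate below $rq$. That holds for $q$ near $1$ under an extra bound such as $\nabla\Psi^\top a\nabla\Psi\le C\Psi^2$, which is not in Assumption~\ref{ass:lyapunov}. Your fallback to Assumption~\ref{ass:cost} gives finiteness at each $x$ but no rate in $x$. Your continuity step truncates $P_\delta V$ using a growth bound on $V$, so it inherits the gap. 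The paper's cited estimates \eqref{eq:lyapunov-bound}--\eqref{eq:discounted-lyapunov} concern deterministic times and time integrals only, so its one-line justification does not address this either.

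The missing idea is that no maximal inequality is needed, because $V$ is a supremum of expectations, not the expectation of a supremum. Use the correctly compensated process $Z_t:=e^{-rt}\Psi(X_t)+e^{-rt}K_\Psi/r$. Note that $e^{-rt}\Psi(X_t)+K_\Psi/r$ need not be a supermartingale, since its drift is only bounded by $e^{-rt}(K_\Psi-(r-\lambda_\Psi)\Psi)$. Your own computation shows $Z$ is a nonnegative local supermartingale, hence a supermartingale with an a.s.\ limit $Z_\infty$. Optional sampling for nonnegative supermartingales then gives, for every $\tau\in\mathcal T$,
\[
\mathbb E_x\bigl[e^{-r\tau}G(X_\tau)\bigr]\le K_\Psi\,\mathbb E_x[Z_\tau]\le K_\Psi Z_0,
\qquad\text{hence}\qquad
V(x)\le K_\Psi\Psi(x)+K_\Psi^2/r ,
\]
with $\{\tau=\infty\}$ handled through $Z_\infty$.

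The pathwise supremum is needed only for the horizon truncation $V=\sup_T V^T$, and there Assumption~\ref{ass:cost} supplies it pointwise. The other truncations in both regularity steps (of $G$ and $c$ on a fixed horizon, and of $V$ inside $P_\delta V$) are controlled uniformly on compacts by the finite-horizon moments $\mathbb E_x\sup_{s\le T}|X_s|^p\le C_{p,T}(1+|x|^p)$ implied by linear growth, once $V\le C(1+\Psi)$ with $\Psi$ polynomial.

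Two hypotheses remain, which you correctly add and the paper uses tacitly. The supermartingale property needs something like $\partial_i\Psi\le0$ on $\{x_i=0\}$ to sign the reflection term. ``Polynomial growth'' needs $\Psi$ to be polynomially bounded above, whereas Assumption~\ref{ass:lyapunov} only bounds $\Psi$ from below.
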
 
\begin{proof} 
The domination \(V\ge G\) follows by choosing the admissible stopping time \(\tau=0\). The upper bound follows from Assumptions~\ref{ass:cost} and \ref{ass:lyapunov}: 
\[ V(x) \le \mathbb E_x\left[\sup_{t\ge0}e^{-rt}G(X_t)\right] < \infty . \]
The polynomial growth estimate follows from \eqref{eq:lyapunov-bound}--\eqref{eq:discounted-lyapunov}. 
The semicontinuity and continuity assertions follow from the Feller property, the continuity of the payoff and cost, and standard stability of optimal stopping values under continuous dependence of the state process on the initial condition. 
\end{proof} 

\begin{proposition}[Dynamic programming and obstacle inequality] \label{prop:dpp-obstacle} 
Under the standing assumptions, the value function satisfies the dynamic programming principle \eqref{eq:dpp}. Consequently, \(V\) is a reflected viscosity solution of the obstacle problem 
\[ 
\max\big\{\mathcal LV-rV-c,\;G-V\big\}=0 \qquad \text{in }\mathbb R_+^2, 
\] 
with normal reflection condition 
\[ 
\partial_i V=0 \qquad \text{on }\{x_i=0\},\quad i=1,2 . 
\] 
Equivalently, in the continuation region \(\mathcal C=\{V>G\}\), the value satisfies 
$\mathcal LV-rV-c=0 $
in the viscosity sense, while \(V=G\) on the stopping region \(\mathcal D=\{V=G\}\). 
\end{proposition}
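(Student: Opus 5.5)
The plan is the standard route from the dynamic programming principle to viscosity properties, adapted to the quadrant with normal reflection.

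\textbf{Step 1: the DPP \eqref{eq:dpp}.} I would prove it via measurable selection of $\varepsilon$-optimal stopping times, or more cleanly via Snell envelope theory. Define the discounted gain process
\[
Z_t=e^{-rt}G(X_t)-\int_0^t e^{-rs}c(X_s)\,ds .
\]
By Assumption~\ref{ass:cost} and \eqref{eq:discounted-lyapunov}, $Z$ is of class (D) with an integrable supremum. Its Snell envelope $S$ is therefore well defined, and by the strong Markov property of the unique strong solution (Assumption~\ref{ass:diffusion}),
\[
S_t=e^{-rt}V(X_t)-\int_0^t e^{-rs}c(X_s)\,ds .
\]
This identification requires $V$ to be measurable; I would get this from lower semicontinuity in Proposition~\ref{prop:basic-wellposedness}, or from continuity where it is available. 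The supermartingale property of $S$ gives the ``$\ge$'' inequality in \eqref{eq:dpp}. For ``$\le$'', pasting at $\rho$ gives $V(x)=\sup_\tau \mathbb E_x[S_{\tau\wedge\rho}\,\text{-type terms}]$, since $S$ is the smallest supermartingale dominating $Z$ and $\tau\wedge\rho$ composed with an optimal continuation attains it.

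\textbf{Step 2: viscosity properties in the interior.} From here on I would work with the continuous version of $V$, as in Proposition~\ref{prop:basic-wellposedness}.
\begin{itemize}
\item \emph{Supersolution.} Take $\varphi\in C^2$ with $V-\varphi$ having a local minimum $0$ at $x\in(0,\infty)^2$. The inequality $G(x)-V(x)\le 0$ always holds. Take $\tau=\rho=h\wedge$ the exit time of a small ball. By \eqref{eq:dpp}, $V(x)\ge \mathbb E_x[e^{-r\rho}\varphi(X_\rho)-\int_0^\rho e^{-rs}c\,ds]$. Applying It\^o's formula to $e^{-rt}\varphi$ (no reflection term inside the ball), dividing by $\mathbb E\rho\sim h$ and letting $h\to0$ gives $\mathcal L\varphi(x)-rV(x)-c(x)\le0$.
\item \emph{Subsolution.} Suppose $V-\varphi$ has a local maximum at $x$, $V(x)>G(x)$, and $\mathcal L\varphi-r\varphi-c\le-\delta<0$ near $x$. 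By continuity, $V>G+\eta$ on a small ball $B$. Then an $\varepsilon$-optimal $\tau$ stops inside $B$ only with small probability cost. Applying the DPP with $\rho=\tau_{B^c}$ and It\^o's formula yields $V(x)\le V(x)-\delta'$, a contradiction.
\end{itemize}
The same computation shows $\mathcal LV-rV-c=0$ in $\mathcal C$. On $\mathcal D$ the contact condition $V=G$ holds by definition.

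\textbf{Step 3: boundary faces.} At $x\in\{x_i=0\}$ the constrained-viscosity condition reads, for the supersolution, $\max\{\mathcal L\varphi-rV-c,\,G-V\}\le0$ or $-\partial_i\varphi(x)\le 0$. The subsolution condition is the corresponding reverse. I would repeat Step 2 keeping the term $\int e^{-rs}\partial_i\varphi(X_s)\,dL^i_s$ from It\^o's formula for \eqref{eq:reflected-sde}. If $\partial_i\varphi(x)$ has the wrong sign, this term is of order $\mathbb E L^i_h$, which for a process started on the face dominates the $O(h)$ drift term, since $\mathbb E L^i_h\sim\sqrt h$ by local uniform ellipticity. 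So either the boundary derivative inequality holds, or the interior inequality does.

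\textbf{Main obstacle.} I expect two points to be the hardest:
\begin{itemize}
\item the measurability and continuity needed to identify the Snell envelope with $e^{-rt}V(X_t)$ in Step 1;
\item the $\sqrt h$ versus $h$ comparison at the faces, including the corner $(0,0)$, where both $L^1$ and $L^2$ act.
\end{itemize}
For the corner, I would first try to treat the two local times separately, using that each $\partial_i\varphi(0)$ condition can be tested independently. I would rely on the strict ellipticity of $a$ near the corner; if that is unavailable, I would fall back on the constrained-viscosity formulation stated in the paper, without fine uniqueness.
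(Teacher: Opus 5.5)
\paragraph{Assessment.} Your route is the one the paper sketches: the DPP from the strong Markov property, which you make precise via the Snell envelope, then the test-function argument over a small exit time, with It\^o's formula producing $\int e^{-rs}\partial_i\varphi(X_s)\,dL^i_s$ on the faces. Steps 1--2 are sound. Your subsolution argument rightly uses the stronger bound $\mathbb E_x[Z_\tau]\le\mathbb E_x[Z_\tau\mathbf 1_{\{\tau<\rho\}}+S_\rho\mathbf 1_{\{\tau\ge\rho\}}]$. This matters because \eqref{eq:dpp}, as literally written, is attained at $\tau=0$. It therefore only encodes the supermartingale property, which gives the supersolution inequality but not the subsolution one. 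Your ``small probability cost'' step is made precise by the pathwise deficit $\min\{\eta e^{-r\rho},\delta(1-e^{-r\rho})/r\}$, which has positive expectation uniformly in $\tau$.

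\paragraph{The gap: Step 3.} The boundary alternative has the wrong sign, and the mechanism you propose does not produce it. Touch from below at $x=(0,x_2)$ with $x_2>0$ and take $\tau=\rho$. This gives
\[
0\ge\mathbb E_x\int_0^\rho e^{-rs}(\mathcal L\varphi-r\varphi-c)(X_s)\,ds+\mathbb E_x\int_0^\rho e^{-rs}\partial_1\varphi(X_s)\,dL^1_s .
\]
So the dangerous case is $\partial_1\varphi(x)>0$. The correct supersolution condition is ``$\max\{\mathcal L\varphi-rV-c,\,G-V\}\le0$ or $\partial_1\varphi(x)\le0$''.

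Your version, $-\partial_i\varphi(x)\le0$, makes the ``wrong sign'' $\partial_i\varphi(x)<0$. There the local-time term is negative, so its $\sqrt h$ dominance yields no contradiction. Worse, your condition fails even for a classical solution with $\partial_1V=0$: test with $\varphi(y)=V(y)-ky_1+My_1^2$ and $M$ large, whenever $a_{11}(x)>0$.

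Two further problems in this step:
\begin{itemize}
\item The scaling $\mathbb E L^i_h\sim\sqrt h$ needs ellipticity on the faces. Assumption~\ref{ass:diffusion} only gives it in $(0,\infty)^2$.
\item ``Falling back on the constrained-viscosity formulation'' is circular, since that formulation is what you are trying to prove.
\end{itemize}

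\paragraph{The fix.} No scaling or boundary ellipticity is needed; just discard the sign-definite reflection term.
\begin{itemize}
\item \emph{Supersolution.} If $\partial_1\varphi(x)>0$, then $\partial_1\varphi>0$ near $x$ and the $dL^1$ integral is nonnegative, so you can drop it. Dividing by $\mathbb E_x\int_0^\rho e^{-rs}\,ds$ then gives the interior inequality.
\item \emph{Subsolution.} If $V(x)>G(x)$ and $\partial_1\varphi(x)<0$, the $dL^1$ integral is nonpositive. Your interior contradiction argument then goes through verbatim.
\item \emph{Corner.} The same works with both local times at once. Unless some $\partial_i\varphi(0)\le0$ (respectively $\ge0$ for subsolutions), both reflection terms have a favorable sign and can be discarded together.
\end{itemize}
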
 
\begin{proof} 
The dynamic programming principle follows from the strong Markov property of the reflected diffusion and the integrability estimates above. The viscosity obstacle characterization is obtained by the standard test-function argument: one compares the value against smooth functions touching from above or below, uses the dynamic programming principle over a small exit time, and applies It\^{o}'s formula to the test function. On the boundary faces \(\{x_i=0\}\), the reflection term in It\^{o}'s formula produces the normal derivative \(\partial_i\varphi\), giving the constrained reflected Neumann condition. The contact condition \(V=G\) on \(\mathcal D\) and the domination \(V\ge G\) follow directly from the definition of \(\mathcal D\) and \eqref{eq:value-dominates-payoff}. 
\end{proof} 
The next sections refine this obstacle formulation. Section~\ref{sec:measure-gain} shows that, because the obstacle \(G=x_1\vee\alpha x_2\) has a kink on \(\Delta\), the stopping gain \(\Gamma=c+rG-\mathcal LG\) is a signed measure. Section~\ref{sec:free-boundary} then uses the killed resolvent of this measure to formulate the correct free-boundary equation.  

\section{Measure-valued stopping gain and killed resolvents} \label{sec:measure-gain}  

\subsection{Distributional generator of the max payoff} \label{subsec:distributional-generator} 

We now turn to the main structural feature of the problem: the stopping gain associated with the payoff 
$G(x)=x_1\vee \alpha x_2$ 
is not an ordinary function. It is a signed measure. This is the point at which the reflected obstacle problem becomes genuinely measure-valued. Let 
$\Delta:=\{x\in\mathbb R_+^2:x_1=\alpha x_2\}$ 
be the kink set of \(G\), and define the two open regions 
$E_1:=\{x\in\mathbb R_+^2:x_1>\alpha x_2\}$ and $E_2:=\{x\in\mathbb R_+^2:x_1<\alpha x_2\}$. 
Then 
$G(x)=x_1$ on $E_1$ and $G(x)=\alpha x_2$ on $E_2$. Thus \(G\) is affine on each side of \(\Delta\), but its gradient jumps across \(\Delta\): 
\[ 
\nabla G=e_1 \quad\text{on }E_1, \qquad \nabla G=\alpha e_2 \quad\text{on }E_2 . 
\]
The jump in the normal derivative produces a singular second derivative concentrated on \(\Delta\). It is useful to write 
\[ 
Y(x):=x_1-\alpha x_2, \qquad n:=\frac{(1,-\alpha)}{\sqrt{1+\alpha^2}} . 
\] 
Then 
$\displaystyle G(x)=\alpha x_2+Y(x)^+ = x_1+(-Y(x))^+$. 
The distributional Hessian of \(G\) is the rank-one measure 
\begin{equation} \label{eq:dist-hessian-G} 
D^2G = \frac{(1,-\alpha)^\top(1,-\alpha)} {\sqrt{1+\alpha^2}}\,\sigma_\Delta , 
\end{equation} 
where \(\sigma_\Delta\) denotes one-dimensional surface measure on \(\Delta\). Equivalently, for every test function \(\varphi\in C_c^\infty((0,\infty)^2)\), 
\[ 
\langle D^2G\,\varphi\rangle = \int_\Delta \frac{(1,-\alpha)^\top(1,-\alpha)} {\sqrt{1+\alpha^2}}\,\varphi(x)\,\sigma_\Delta(dx). 
\] 
Recall that 
$a(x)=\sigma(x)\sigma(x)^\top $. 
On \(E_1\cup E_2\), the second-order term in \(\mathcal LG\) vanishes because \(G\) is affine there. Hence the absolutely continuous part of \(\mathcal LG\) is 
\[
\mathcal LG(x) = b_1(x)\mathbf 1_{E_1}(x) + \alpha b_2(x)\mathbf 1_{E_2}(x), \qquad x\notin\Delta . 
\]
The second-order part contributes the singular measure 
\begin{equation} \label{eq:LG-singular} \frac12\operatorname{Tr}\big(a(x)D^2G(dx)\big) = \frac{ (1,-\alpha)a(x)(1,-\alpha)^\top } {2\sqrt{1+\alpha^2}}\,\sigma_\Delta(dx). 
\end{equation} 
Therefore, in the sense of distributions, 
\[
\mathcal LG(dx) = \Big[ b_1(x)\mathbf 1_{E_1}(x) + \alpha b_2(x)\mathbf 1_{E_2}(x) \Big]\,dx + \frac{ (1,-\alpha)a(x)(1,-\alpha)^\top } {2\sqrt{1+\alpha^2}}\,\sigma_\Delta(dx). 
\]
The measure-valued stopping gain is defined by 
$\displaystyle \Gamma:=c+rG-\mathcal LG $. 
Because \(\mathcal LG\) contains the singular measure \eqref{eq:LG-singular}, the signed measure \(\Gamma\) contains the negative singular component generated by the kink of the max payoff.

\subsection{Diagonal singular measure} \label{subsec:diagonal-singular-measure} 

We record the previous calculation as a theorem. This is the first main structural result of the paper. 
\begin{theorem}[Measure-valued stopping gain] \label{thm:measure-valued-gain} 
Assume that \(b\), \(\sigma\), and \(c\) satisfy the standing assumptions of Section~\ref{sec:problem}. Let 
$\displaystyle G(x)=x_1\vee \alpha x_2$ and $\Delta=\{x_1=\alpha x_2\}$. 
Then 
$\displaystyle \Gamma:=c+rG-\mathcal LG$ 
is a signed Radon measure on \(\mathbb R_+^2\). On \(\mathbb R_+^2\setminus\Delta\), it is absolutely continuous with density 
\begin{equation} \label{eq:Gamma-ac-density} 
\Gamma^{\rm ac}(x) = c(x)+rG(x) - b_1(x)\mathbf 1_{\{x_1>\alpha x_2\}} - \alpha b_2(x)\mathbf 1_{\{x_1<\alpha x_2\}} . 
\end{equation} 
Its singular component on the diagonal is 
\begin{equation} \label{eq:Gamma-diagonal} 
\Gamma^\Delta(dx) = - \frac{ (1,-\alpha)a(x)(1,-\alpha)^\top } {2\sqrt{1+\alpha^2}}\,\sigma_\Delta(dx). 
\end{equation} 
Consequently, 
$\displaystyle \Gamma(dx) = \Gamma^{\rm ac}(x)\,dx + \Gamma^\Delta(dx)$. 
\end{theorem}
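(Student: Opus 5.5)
The plan is to compute \(\mathcal LG\) as a distribution on the open quadrant \((0,\infty)^2\) and identify it term by term. Once \(\mathcal LG\) is known, \(\Gamma=c+rG-\mathcal LG\) follows by adding the continuous, locally bounded function \(c+rG\), viewed as the measure \((c+rG)\,dx\).

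The key identity is \(G=\alpha x_2+Y^+\) with \(Y(x)=x_1-\alpha x_2\). Since \(\alpha x_2\) is affine, all second-order content of \(G\) comes from \(Y^+\). For \(\varphi\in C_c^\infty((0,\infty)^2)\), I will change to orthonormal coordinates \((s,t)\), where \(s=Y/\sqrt{1+\alpha^2}\) is the signed distance to \(\Delta\) along \(n\) and \(t\) runs along \(\Delta\). In these coordinates \(Y^+=\sqrt{1+\alpha^2}\,s^+\), and the one-dimensional fact \((s^+)''=\delta_0\) holds distributionally in \(s\). Applying Fubini in \(t\) then gives
\[
\langle D^2G,\varphi\rangle=\sqrt{1+\alpha^2}\int_\Delta \varphi\, n\otimes n\,d\sigma_\Delta .
\]
Because \(\sqrt{1+\alpha^2}\,n\otimes n=(1,-\alpha)^\top(1,-\alpha)/\sqrt{1+\alpha^2}\), this is exactly \eqref{eq:dist-hessian-G}. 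As a cross-check, I will also verify it by integrating by parts twice on \(E_1\) and \(E_2\) separately. The boundary terms along \(\Delta\) cancel for \(G\) itself, since \(G\) is continuous. For the first derivatives they leave the gradient jump \([\nabla G]=e_1-\alpha e_2=(1,-\alpha)\) paired with the normal \(n\).

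Next comes the first-order part. \(G\) is Lipschitz, so \(\nabla G\in L^\infty_{\rm loc}\) is its distributional gradient, equal to \(e_1\) on \(E_1\) and \(\alpha e_2\) on \(E_2\). Since \(\Delta\) is Lebesgue-null, \(b\cdot\nabla G=b_1\mathbf 1_{E_1}+\alpha b_2\mathbf 1_{E_2}\) almost everywhere. This function is locally bounded because \(b\) is continuous.

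For the second-order part I must make sense of \(\tfrac12\operatorname{Tr}(a\,D^2G)\) when \(a\) is only continuous (locally Lipschitz) and not smooth. I will define it as the product of the continuous matrix function \(a\) with the matrix-valued Radon measure \(D^2G\). This is legitimate because \(D^2G\) has order zero, and the product is the Radon measure \(\tfrac12 (1,-\alpha)a(x)(1,-\alpha)^\top(1+\alpha^2)^{-1/2}\sigma_\Delta\), which is \eqref{eq:LG-singular}. Equivalently, \(\mathcal LG\) is interpreted in nondivergence form, so no derivative ever falls on \(a\). I will state this convention explicitly, since it is the interpretation compatible with the It\^o--Tanaka formula for \(Y^+(X)\).

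Collecting terms, the measure \(\Gamma\) splits as follows:
\begin{itemize}
\item an absolutely continuous part with density \eqref{eq:Gamma-ac-density};
\item a singular part, namely \(-\tfrac12\operatorname{Tr}(a\,D^2G)\), which is \eqref{eq:Gamma-diagonal}.
\end{itemize}
This singular part is concentrated on \(\Delta\), which is \(\mathcal L^2\)-null, and \(\sigma_\Delta\perp dx\). By uniqueness of the Lebesgue decomposition, these are the ac and singular components. Local finiteness holds because \(c\), \(G\), \(b\) and \(a\) are locally bounded and \(\sigma_\Delta\) is locally finite, so \(\Gamma\) is a signed Radon measure.

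The main obstacle is extending the statement from \((0,\infty)^2\) to all of \(\mathbb R_+^2\), that is, to the corner where \(\Delta\) meets \(\partial\mathbb R_+^2\). On the faces, test functions need not vanish, and integration by parts would produce boundary terms. In the reflected setting these terms are the ones paired with \(dL^i\). For \(G\) they are \(\partial_1 G=\mathbf 1_{E_1}\) on \(\{x_1=0\}\cap E_1=\emptyset\) (trivially) and \(\partial_2G=\alpha\mathbf 1_{E_2}\) on \(\{x_2=0\}\), which lies in \(E_1\) away from the origin, so these also vanish. I therefore plan to define \(\Gamma\) on \(\mathbb R_+^2\) as the interior measure extended by zero to the boundary. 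I will check that the only possible boundary contribution is at the single point \(0\). That point is charged neither by \(dx\) nor by \(\sigma_\Delta\), and \(L^i\) increases on the faces, so this exceptional point can be ignored up to a remark.
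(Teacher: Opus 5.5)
Your proposal is correct and follows essentially the same route as the paper: both write $G=\alpha x_2+Y^+$, use $(y^+)''=\delta_0$ to obtain the rank-one Hessian $(1,-\alpha)^\top(1,-\alpha)\,\sigma_\Delta/\sqrt{1+\alpha^2}$, contract it with $\tfrac12 a$, and read off the drift part piecewise on $E_1$ and $E_2$. Your additional steps make rigorous what the paper leaves implicit. These are the rotated-coordinate/Fubini justification of the $\sigma_\Delta/|\nabla Y|$ normalization, the nondivergence-form convention for multiplying $D^2G$ by the merely continuous $a$, the Lebesgue-decomposition uniqueness argument, and the treatment of the faces, where indeed $\partial_iG=0$ away from the corner.
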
 

\begin{proof} 
On the two open regions \(E_1\) and \(E_2\), the payoff is affine. Hence the classical pointwise generator gives 
$\mathcal LG=b_1$ on $E_1$ and $\mathcal LG=\alpha b_2$ on $E_2$. 
It remains to compute the distributional second derivative across the diagonal. Writing 
$G(x)=\alpha x_2+(x_1-\alpha x_2)^+$, 
and using the one-dimensional distributional identity 
\[ \frac{d^2}{dy^2}y^+=\delta_0, \] 
with \(y=x_1-\alpha x_2\), gives 
$\displaystyle D^2G = (1,-\alpha)^\top(1,-\alpha) \delta_{\{x_1-\alpha x_2=0\}}$.
Since 
\[ \delta_{\{x_1-\alpha x_2=0\}} = \frac{1}{|\nabla(x_1-\alpha x_2)|}\,\sigma_\Delta = \frac{1}{\sqrt{1+\alpha^2}}\,\sigma_\Delta , \] 
we obtain \eqref{eq:dist-hessian-G}. Contracting this measure with \(\frac12 a(x)\) gives the singular part of \(\mathcal LG\), namely \eqref{eq:LG-singular}. Subtracting \(\mathcal LG\) from \(c+rG\) gives \eqref{eq:Gamma-ac-density} and \eqref{eq:Gamma-diagonal}. 
\end{proof} 

\begin{remark}[Sign of the singular component] \label{rem:sign-singular-component} 
The singular part of \(\mathcal LG\) is nonnegative because \(G\) is convex. Since the stopping gain is \(c+rG-\mathcal LG\), the diagonal contribution to \(\Gamma\) is nonpositive. This sign is important: ignoring it systematically overestimates the continuation gain and leads to an incorrect free-boundary equation. 
\end{remark} 

\subsection{Kink local time and Tanaka formula} \label{subsec:kink-local-time} 
The measure calculation above has an equivalent probabilistic interpretation through local time. Define the one-dimensional semimartingale 
$\displaystyle Y_t:=X_t^1-\alpha X_t^2 $. 
Then 
$\displaystyle G(X_t)=\alpha X_t^2+Y_t^+ $. 
By the It\^{o}--Tanaka formula, 
\begin{equation} \label{eq:tanaka-Y} 
dY_t^+ = \mathbf 1_{\{Y_t>0\}}\,dY_t + \frac12\,d\ell_t^0(Y), \end{equation} 
where \(\ell^0(Y)\) is the symmetric local time of \(Y\) at zero. Therefore, the kink of \(G\) contributes a local-time term whenever the reflected diffusion crosses the diagonal \(\Delta\). Let 
$\displaystyle q(x):=(1,-\alpha)a(x)(1,-\alpha)^\top $. 
The quadratic variation of the martingale part of \(Y\) satisfies 
$\displaystyle d\langle Y\rangle_t=q(X_t)\,dt$ . 
The occupation-density formula then identifies the diagonal surface measure in Theorem~\ref{thm:measure-valued-gain} with the local-time contribution in \eqref{eq:tanaka-Y}. In particular, the singular part \eqref{eq:Gamma-diagonal} corresponds to the negative local-time term in the stopping-gain decomposition. We shall use the following generalized It\^{o} formula. For clarity, it is stated in the form needed below. 

\begin{lemma}[Generalized It\^{o} formula for the max payoff] \label{lem:ito-max-payoff} 
Let \(X\) be the reflected diffusion \eqref{eq:reflected-sde}, and let \(G(x)=x_1\vee \alpha x_2\). Then, for every \(t\ge0\), 
\[
e^{-rt}G(X_t) = G(x) + \int_0^t e^{-rs} \big(\mathcal LG-rG\big)(X_s)\,ds + M_t^G 
+ \frac12\int_0^t e^{-rs}\,d\ell_s^0(Y) + B_t^G , 
\]
where \(M^G\) is a local martingale, \(Y=X^1-\alpha X^2\), and \(B^G\) denotes the boundary-reflection contribution \begin{equation} \label{eq:boundary-contribution-G} 
B_t^G = \int_0^t e^{-rs}\partial_1G(X_s)\,dL_s^1 + \int_0^t e^{-rs}\partial_2G(X_s)\,dL_s^2 , 
\end{equation} 
with the derivatives interpreted by one-sided traces away from the intersection of the boundary faces with \(\Delta\). 
\end{lemma}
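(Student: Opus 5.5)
The approach is to use the decomposition $G(X_t)=\alpha X_t^2+Y_t^+$ with $Y=X^1-\alpha X^2$, apply It\^{o}'s formula to the smooth part and the It\^{o}--Tanaka formula \eqref{eq:tanaka-Y} to the kink part, and then put in the discount factor by integration by parts. From \eqref{eq:reflected-sde}, both $X^2$ and $Y$ are continuous semimartingales:
\[
dX_t^2=b_2(X_t)\,dt+(\sigma(X_t)dW_t)_2+dL_t^2,\qquad dY_t=\big(b_1-\alpha b_2\big)(X_t)\,dt+(1,-\alpha)\sigma(X_t)\,dW_t+dL_t^1-\alpha\,dL_t^2 .
\]
The term $\alpha X^2$ is linear, so its differential is simply $\alpha\,dX_t^2$.

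Adding this to \eqref{eq:tanaka-Y} gives
\[
dG(X_t)=\big(\alpha b_2+\mathbf 1_{\{Y_t>0\}}(b_1-\alpha b_2)\big)(X_t)\,dt+dN_t+\tfrac12\,d\ell^0_t(Y)+\big(\mathbf 1_{\{Y_t>0\}}\,dL_t^1+\alpha\,\mathbf 1_{\{Y_t\le 0\}}\,dL_t^2\big),
\]
where $N$ is the continuous local martingale $\int(\alpha e_2+\mathbf 1_{\{Y>0\}}(1,-\alpha))\sigma(X)\,dW$. The drift coefficient equals $b_1$ on $E_1$ and $\alpha b_2$ on $E_2$, so it agrees with the absolutely continuous part $\mathcal LG$ from Theorem~\ref{thm:measure-valued-gain} off $\Delta$.

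The coefficient differs from $\mathcal LG$ only on $\{s:X_s\in\Delta\}=\{s:Y_s=0\}$. By the occupation-density formula, $\int_0^t\mathbf 1_{\{Y_s=0\}}q(X_s)\,ds=\int_0^t\mathbf 1_{\{Y_s=0\}}\,d\langle Y\rangle_s=0$. Local ellipticity gives $q>0$ in $(0,\infty)^2$, so this set has Lebesgue measure zero almost surely as long as $X$ stays in the open quadrant. On the boundary faces, $\Delta$ meets $\partial\mathbb R_+^2$ only at the origin. There I would argue that the time spent at a single point has measure zero, using local ellipticity near the origin or the reflection structure; otherwise the statement should simply be read with this null-set caveat. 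So the value of $\mathcal LG$ on $\Delta$ is irrelevant.

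The reflection terms are handled the same way. $L^1$ charges only $\{X^1=0\}$, where $Y\le 0$, so $\partial_1G$ is taken as the trace from $E_2$, which is $0$. $L^2$ charges only $\{X^2=0\}$, where $Y\ge0$, and there $\partial_2G=0$ away from the origin. This reproduces \eqref{eq:boundary-contribution-G}, with one-sided traces understood away from the corner, which is exactly the convention in the statement. Apart from the corner point, these boundary integrals actually vanish.

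Finally, integration by parts with the finite-variation process $e^{-rt}$ gives $d(e^{-rt}G(X_t))=e^{-rt}dG(X_t)-re^{-rt}G(X_t)\,dt$. This produces the $(\mathcal LG-rG)$ integrand, $M^G_t=\int_0^te^{-rs}dN_s$, the discounted local-time term, and $B^G$.

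A remaining subtlety is the local-time convention: \eqref{eq:tanaka-Y} with the indicator $\mathbf 1_{\{Y>0\}}$ corresponds to the right local time, not the symmetric one. With the symmetric local time, the indicator becomes $\mathbf 1_{\{Y>0\}}+\tfrac12\mathbf 1_{\{Y=0\}}$. The difference between the two conventions is carried by $\int\mathbf 1_{\{Y=0\}}\,dY$. Its $ds$ and $dW$ parts vanish by the occupation argument above, and its $dL^1-\alpha\,dL^2$ part is supported at the corner and only enters $B^G$ at the origin.

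The main obstacle is the behaviour at the corner, where the kink set meets both reflecting faces. I would treat it by localization with stopping times that keep $X$ at distance $\varepsilon$ from $0$ and then let $\varepsilon\to0$, absorbing any corner contribution into $B^G$ as the statement allows.
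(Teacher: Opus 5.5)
Your proposal is correct and follows the route the paper indicates in the paragraph just before the lemma, which has no separate proof: write $G(X_t)=\alpha X_t^2+Y_t^+$, apply the It\^{o}--Tanaka formula \eqref{eq:tanaka-Y} to $Y^+$, and insert the discount factor by integration by parts. Your extra details are ones the paper leaves implicit: the Lebesgue-null occupation time of $\Delta$ away from the corner (via $q>0$ and the occupation-density formula), the remark that \eqref{eq:tanaka-Y} as written uses the right rather than the symmetric local time (harmless here), and the observation that $B^G$ can be charged only at the origin.
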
 

\begin{remark}[Boundary terms] \label{rem:boundary-terms} 
For the payoff \(G\) itself, the reflection term in \eqref{eq:boundary-contribution-G} need not vanish. In contrast, for candidate value functions satisfying the reflected Neumann condition, the corresponding boundary term disappears. This is one reason why the variational inequality is naturally formulated for \(V\), while the measure-valued stopping gain is computed from \(G\). 
\end{remark} 

\subsection{The killed-resolvent representation} \label{subsec:killed-resolvent-representation} 

We now derive the correct potential representation of the value. Let 
$\mathcal C=\{V>G\}$, $\mathcal D=\{V=G\}$, 
and 
$\tau_{\mathcal D}:=\inf\{t\ge0:X_t\in\mathcal D\}$. 
For a signed smooth measure \(\mu=\mu^+-\mu^-\), let \(A^\mu=A^{\mu^+}-A^{\mu^-}\) denote the associated signed additive functional. We define the killed \(r\)-resolvent of \(\mu\) on \(\mathcal C\) by 
\[
R_r^{\mathcal C}\mu(x) := \mathbb E_x \left[ \int_0^{\tau_{\mathcal D}}e^{-rs}\,dA_s^\mu \right], \qquad x\in\mathcal C, 
\]
whenever the expectation is finite. For the stopping gain \(\Gamma\), this means 
\[
R_r^{\mathcal C}\Gamma(x) = \mathbb E_x \left[ \int_0^{\tau_{\mathcal D}}e^{-rs} \Gamma^{\rm ac}(X_s)\,ds \right] 
- \mathbb E_x \left[ \int_0^{\tau_{\mathcal D}}e^{-rs} \frac{q(X_s)}{2\sqrt{1+\alpha^2}}\,d\sigma_\Delta(X_s) \right],
\]
where the second summand on the right-hand side is understood through the additive functional associated with the diagonal measure \(\sigma_\Delta\). Equivalently, it may be expressed through the local time of \(Y=X^1-\alpha X^2\). The next theorem is the second main result of this section. 

\begin{theorem}[Killed-resolvent representation] \label{thm:killed-resolvent-representation} 
Assume that the standing assumptions hold and that \(\tau_{\mathcal D}\) is optimal for the stopping problem \eqref{eq:value}. Assume also that the additive functional associated with \(\Gamma\) is integrable up to \(\tau_{\mathcal D}\). Then, for every \(x\in\mathcal C\), 
\begin{equation}\label{eq:value-killed-resolvent}
V(x)=G(x)-R_r^{\mathcal C}\Gamma(x).
\end{equation}
Equivalently, 
$V(x)-G(x) = -R_r^{\mathcal C}\Gamma(x)$. 
\end{theorem}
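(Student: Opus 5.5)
The plan is to start from the optimality of \(\tau_{\mathcal D}\), which gives for \(x\in\mathcal C\)
\[
V(x)=\mathbb E_x\Big[e^{-r\tau_{\mathcal D}}G(X_{\tau_{\mathcal D}})-\int_0^{\tau_{\mathcal D}}e^{-rs}c(X_s)\,ds\Big],
\]
with the convention \(e^{-r\tau_{\mathcal D}}G(X_{\tau_{\mathcal D}})=0\) on \(\{\tau_{\mathcal D}=\infty\}\). I would then expand \(e^{-rt}G(X_t)\) with the generalized It\^o formula of Lemma~\ref{lem:ito-max-payoff}, evaluated at the localized times \(\rho_n:=\tau_{\mathcal D}\wedge n\wedge\inf\{t:|X_t|\ge n\}\). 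The reason for \(\rho_n\) is that the local martingale \(M^G\) stopped at \(\rho_n\) is a true martingale (its integrand \(e^{-rs}\nabla G\,\sigma\) is bounded there), so its expectation vanishes. This gives
\[
\mathbb E_x\big[e^{-r\rho_n}G(X_{\rho_n})\big]=G(x)+\mathbb E_x\Big[\int_0^{\rho_n}e^{-rs}(\mathcal LG-rG)(X_s)\,ds+\tfrac12\int_0^{\rho_n}e^{-rs}\,d\ell^0_s(Y)+B^G_{\rho_n}\Big].
\]

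The second step is to show that the boundary term \(B^G\) vanishes, despite Remark~\ref{rem:boundary-terms}. The measure \(dL^1\) charges only \(\{X^1=0\}\), and there \(x_1=0\le\alpha x_2\), so one is in \(\overline{E_2}\) where \(\partial_1G=0\). Likewise \(dL^2\) charges only \(\{X^2=0\}\subset\overline{E_1}\), where \(\partial_2G=0\). The only ambiguous point is the corner \(0\in\Delta\). I would handle it by writing \(G(X)=\alpha X^2+Y^+\) and applying Tanaka to \(Y^+\) with the convention \(\mathbf 1_{\{Y>0\}}\), so that no reflection increment is counted at \(Y=0\). Alternatively, one can note that the corner has zero potential (occupation time zero by local ellipticity, and \(L\) cannot charge it beyond a null set). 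Then I would subtract \(\int_0^{\rho_n}e^{-rs}c\,ds\) on both sides, obtaining
\[
\mathbb E_x\Big[e^{-r\rho_n}G(X_{\rho_n})-\int_0^{\rho_n}e^{-rs}c\,ds\Big]=G(x)-\mathbb E_x\Big[\int_0^{\rho_n}e^{-rs}\Gamma^{\rm ac}(X_s)\,ds-\tfrac12\int_0^{\rho_n}e^{-rs}\,d\ell^0_s(Y)\Big].
\]

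The third step identifies the local-time term with the additive functional of \(\Gamma^\Delta\) from Theorem~\ref{thm:measure-valued-gain}. By the occupation-density formula with \(d\langle Y\rangle_t=q(X_t)\,dt\), one has formally \(\tfrac12\ell^0_t(Y)=\tfrac12\int_0^t q(X_s)\delta_0(Y_s)\,ds\). The coarea factor \(\delta_{\{Y=0\}}=\sigma_\Delta/\sqrt{1+\alpha^2}\) then shows that \(\tfrac12\ell^0(Y)\) is the positive continuous additive functional whose Revuz measure is \(\frac{q}{2\sqrt{1+\alpha^2}}\sigma_\Delta=-\Gamma^\Delta\). To make this rigorous, I would approximate \(\delta_0\) by mollifiers \(\phi_\varepsilon\) and use \(\int_0^t\phi_\varepsilon(Y_s)q(X_s)\,ds\to\ell^0_t(Y)\) in \(L^1\). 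Combined with uniqueness of the additive functional for a smooth measure, this shows the bracket equals \(\mathbb E_x\int_0^{\rho_n}e^{-rs}\,dA^\Gamma_s\).

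Finally, I would let \(n\to\infty\). On the right side, the convergence to \(R^{\mathcal C}_r\Gamma(x)\) follows from the assumed integrability of \(A^{\Gamma^\pm}\) up to \(\tau_{\mathcal D}\) and monotone/dominated convergence applied separately to \(A^{\Gamma^+}\) and \(A^{\Gamma^-}\). On the left side, \(e^{-r\rho_n}G(X_{\rho_n})\to e^{-r\tau_{\mathcal D}}G(X_{\tau_{\mathcal D}})\) in \(L^1\), dominated by \(\sup_te^{-rt}G(X_t)\), which is integrable by Assumption~\ref{ass:cost}. On \(\{\tau_{\mathcal D}=\infty\}\) the limit is \(0\), since \(e^{-rt}\Psi(X_t)\to0\) by Assumption~\ref{ass:lyapunov}. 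The cost integral converges by monotone convergence. This yields \eqref{eq:value-killed-resolvent}. I expect the main obstacle to be the rigorous Revuz identification of \(\tfrac12\ell^0(Y)\) with \(A^{-\Gamma^\Delta}\) under only local ellipticity, together with the corner \(0\in\Delta\).
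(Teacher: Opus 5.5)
Your route is the paper's: optimality of $\tau_{\mathcal D}$, the generalized It\^o formula for $e^{-rt}G(X_t)$ up to $\tau_{\mathcal D}$, removal of the martingale by localization, and recognition of $\mathbb E_x\int_0^{\tau_{\mathcal D}}e^{-rs}\,dA^\Gamma_s$ as $R_r^{\mathcal C}\Gamma(x)$. Several of your steps are sound and more careful than the paper: the localization by $\rho_n$, the limit passage, and above all your observation that $\partial_1G=0$ on $\{x_1=0<x_2\}$ and $\partial_2G=0$ on $\{x_2=0<x_1\}$. The paper's stopped It\^o identity drops $B^G$ without comment, and your observation is what justifies this away from the origin.

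The step that fails as written is the corner.
\begin{itemize}
\item The convention $\mathbf 1_{\{Y>0\}}$ only suppresses the reflection increments inside $dY$. The summand $\alpha X^2$ still brings in $\alpha\,dL^2$, so the total reflection term is $\mathbf 1_{\{Y>0\}}dL^1+\alpha\mathbf 1_{\{Y\le0\}}dL^2=\alpha\mathbf 1_{\{X=0\}}dL^2$.
\item That convention also yields the right local time, while your symmetric-mollifier argument identifies $-\Gamma^\Delta$ with half the symmetric local time. The two differ by $\int\mathbf 1_{\{Y_s=0\}}\,dV_s=\int\mathbf 1_{\{X_s=0\}}(dL^1_s-\alpha\,dL^2_s)$, where $V$ is the finite-variation part of $Y$ (the drift part vanishes).
\item Whatever convention you use, the expansion of $e^{-rt}G(X_t)$ therefore carries the extra nonnegative term $\tfrac12\int_0^te^{-rs}\mathbf 1_{\{X_s=0\}}(dL^1_s+\alpha\,dL^2_s)$. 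This is because $G$ genuinely violates the Neumann condition at the origin: its one-sided derivatives in the reflection directions $e_1,e_2$ are $1$ and $\alpha$.
\end{itemize}

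Your alternative does not dispose of this term either. Zero Lebesgue occupation time of the origin says nothing about $L$, which is singular with respect to $ds$ anyway. Nor is the origin polar in general: for correlation $\rho>0$ the normally reflected process reaches it, by Varadhan--Williams.

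What the proof actually needs is $\int_0^\infty\mathbf 1_{\{X_s=0\}}\,dL^i_s=0$ a.s. for $i=1,2$. This is known for constant-coefficient SRBMs (Reiman--Williams \cite{reiman1988boundary}), but under the general coefficients here it must be assumed or proved. Otherwise $\Gamma$ needs an extra atom at the origin. The paper does not settle this point either, since its Lemma~\ref{lem:ito-max-payoff} is stated away from $\partial\mathbb R_+^2\cap\Delta$.
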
 
\begin{proof} 
Fix \(x\in\mathcal C\) and write \(\tau=\tau_{\mathcal D}\). By optimality of \(\tau\), 
\begin{equation} \label{eq:optimality-tauD} 
V(x) = \mathbb E_x \left[ e^{-r\tau}G(X_\tau) - \int_0^\tau e^{-rs}c(X_s)\,ds \right]. 
\end{equation} 
Apply the generalized It\^{o} formula to \(e^{-rt}G(X_t)\) on the stopped interval \([0,\tau]\). In measure form this gives \begin{equation} \label{eq:ito-measure-G-stopped} 
e^{-r\tau}G(X_\tau) = G(x) - \int_0^\tau e^{-rs}\,dA_s^{\,rG-\mathcal LG} + M_\tau , 
\end{equation} 
where \(M\) is a local martingale. After localization and passage to the limit, the martingale has zero expectation under the integrability assumptions. Substituting \eqref{eq:ito-measure-G-stopped} into \eqref{eq:optimality-tauD} yields 
\begin{align*} 
V(x) &= G(x) - \mathbb E_x \left[ \int_0^\tau e^{-rs}\,dA_s^{\,rG-\mathcal LG} \right] - \mathbb E_x \left[ \int_0^\tau e^{-rs}c(X_s)\,ds \right] \\ 
&= G(x) - \mathbb E_x \left[ \int_0^\tau e^{-rs}\,dA_s^\Gamma \right] . 
\end{align*} 
The last expectation is precisely \(R_r^{\mathcal C}\Gamma(x)\), because the process is killed at \(\tau_{\mathcal D}\). This proves \eqref{eq:value-killed-resolvent}. 
\end{proof} 

\begin{remark}[Sign convention] \label{rem:sign-convention} 
The representation 
$V=G-R_r^{\mathcal C}\Gamma$ 
is consistent with the continuation inequality \(V>G\). In the continuation region, the killed potential \(R_r^{\mathcal C}\Gamma\) is typically negative. Thus the stopping advantage is 
$V-G=-R_r^{\mathcal C}\Gamma$. 
This sign convention is the reason for defining \(\Gamma=c+rG-\mathcal LG\), rather than \(\mathcal LG-rG-c\). 
\end{remark} 

\subsection{Why the unrestricted resolvent fails} \label{subsec:unrestricted-resolvent-fails} 

The killed resolvent in Theorem~\ref{thm:killed-resolvent-representation} is not interchangeable with the unrestricted reflected resolvent. Let \(R_r^{\mathrm R}\) denote the \(r\)-resolvent of the reflected diffusion without killing: 
\begin{equation} \label{eq:unrestricted-resolvent} 
R_r^{\mathrm R}\mu(x) := \mathbb E_x \left[ \int_0^\infty e^{-rs}\,dA_s^\mu \right]. 
\end{equation} 
A tempting but generally false formula is 
\begin{equation} \label{eq:false-unrestricted} 
V(x) = G(x)-R_r^{\mathrm R}(\Gamma\mathbf 1_{\mathcal C})(x). \end{equation} 
The problem is that \eqref{eq:false-unrestricted} integrates over all future times at which the unrestricted reflected process lies in \(\mathcal C\), even after the optimal stopping time has occurred. But the stopped optimization problem terminates at \(\tau_{\mathcal D}\). Any occupation of \(\mathcal C\) after \(\tau_{\mathcal D}\) is irrelevant to the payoff and must not enter the potential representation. Indeed, 
\begin{equation}  \label{eq:resolvent-decomposition} 
R_r^{\mathrm R}(\Gamma\mathbf 1_{\mathcal C})(x) = \mathbb E_x \left[ \int_0^{\tau_{\mathcal D}}e^{-rs}\,dA_s^\Gamma \right] 
+ \mathbb E_x \left[ \int_{\tau_{\mathcal D}}^\infty e^{-rs}\mathbf 1_{\mathcal C}(X_s)\,dA_s^\Gamma \right].
\end{equation} 
The first term is \(R_r^{\mathcal C}\Gamma(x)\). The second term is generally nonzero, because the reflected diffusion is not absorbed at \(\mathcal D\). After entering the stopping region, the unrestricted process may later return to the continuation region. Such post-stopping returns are counted by \(R_r^{\mathrm R}\) but are not part of the stopped problem. 

\begin{proposition}[Failure of the unrestricted resolvent] \label{prop:failure-unrestricted-resolvent} 
Unless the continuation region is invariant after first exit, or the process is explicitly killed at \(\tau_{\mathcal D}\), the representation 
$V=G-R_r^{\mathrm R}(\Gamma\mathbf 1_{\mathcal C})$ is generally false. The correct potential operator is the killed resolvent \(R_r^{\mathcal C}\). 
\end{proposition}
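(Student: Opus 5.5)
The argument rests on the decomposition \eqref{eq:resolvent-decomposition} together with Theorem~\ref{thm:killed-resolvent-representation}. First, assuming $\tau_{\mathcal D}$ is optimal and the additive functional of $\Gamma$ is integrable on $[0,\infty)$, the strong Markov property at $\tau_{\mathcal D}$ rewrites the post-stopping term as
\[
\mathbb E_x\left[\int_{\tau_{\mathcal D}}^\infty e^{-rs}\mathbf 1_{\mathcal C}(X_s)\,dA_s^\Gamma\right]
=\mathbb E_x\left[e^{-r\tau_{\mathcal D}}\,R_r^{\mathrm R}(\Gamma\mathbf 1_{\mathcal C})(X_{\tau_{\mathcal D}})\right].
\]
By \eqref{eq:resolvent-decomposition} and Theorem~\ref{thm:killed-resolvent-representation}, we then get
\[
G-R_r^{\mathrm R}(\Gamma\mathbf 1_{\mathcal C})=V-\mathbb E_x\big[e^{-r\tau_{\mathcal D}}R_r^{\mathrm R}(\Gamma\mathbf 1_{\mathcal C})(X_{\tau_{\mathcal D}})\big].
\]
So \eqref{eq:false-unrestricted} holds at $x$ if and only if this correction term vanishes. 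The correction term vanishes under either exceptional hypothesis. If the process is killed at $\tau_{\mathcal D}$, the post-stopping integral is zero by definition, and $R_r^{\mathrm R}$ reduces to $R_r^{\mathcal C}$. If $\mathcal C$ is not revisited after first exit, meaning $\mathbf 1_{\mathcal C}(X_s)=0$ for $s\ge\tau_{\mathcal D}$ almost surely, the integrand vanishes.

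For "generally false," I will exhibit a single configuration in which the correction is nonzero. The plan is to show that the function $z\mapsto R_r^{\mathrm R}(\Gamma\mathbf 1_{\mathcal C})(z)$ is nonzero on a set of $\partial\mathcal C$ hit with positive probability. I will take a point $z\in\partial\mathcal D$ and use local uniform ellipticity (Assumption~\ref{ass:diffusion}) with a support-theorem argument: started at $z$, the reflected diffusion enters the open set $\mathcal C$ near $z$ in positive time with positive probability. It spends positive Lebesgue time there, and $\Gamma^{\rm ac}$ has a fixed sign on that time set. For the sign, take a neighbourhood away from $\Delta$ where the absolutely continuous density $\Gamma^{\rm ac}=c+rG-b_1$ (or $-\alpha b_2$) is strictly negative. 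That sign is exactly what makes continuation locally profitable, and it is consistent with Remark~\ref{rem:sign-convention}. The resulting strictly negative contribution is not offset if we restrict the choice of $z$ to a region where the diagonal singular part is absent or has the same sign. By Remark~\ref{rem:sign-singular-component}, $\Gamma^\Delta\le0$, so the diagonal part never offsets a negative absolutely continuous part. Hence $R_r^{\mathrm R}(\Gamma\mathbf 1_{\mathcal C})(z)<0$ on a boundary arc. Continuity of this function comes from the strong Feller hypothesis of Proposition~\ref{prop:basic-wellposedness}. Since $X_{\tau_{\mathcal D}}$ lands on that arc with positive probability from suitable $x\in\mathcal C$, the correction term is strictly negative. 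Therefore $G-R_r^{\mathrm R}(\Gamma\mathbf 1_{\mathcal C})(x)>V(x)$, and the unrestricted formula overestimates the value.

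The main obstacle is the last step. I must show that the post-stopping term does not cancel for every $x$ at once, since $\Gamma$ is signed and positive and negative pieces could balance. To avoid needing a global sign, I would localize: choose $x\in\mathcal C$ close to a boundary point $z$ as above, so that $X_{\tau_{\mathcal D}}$ concentrates near $z$ by continuity of paths. Then the correction term is approximately $\mathbb E_x[e^{-r\tau_{\mathcal D}}]\cdot R_r^{\mathrm R}(\Gamma\mathbf 1_{\mathcal C})(z)$, which is nonzero. If a fully general statement proves too delicate, I will state the proposition's "generally false" as this local non-vanishing criterion. I will also illustrate it with the one-dimensional reflected Brownian case, where both resolvents are explicit and differ.
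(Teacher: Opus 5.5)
Your reduction step is correct and sharper than the paper's proof. The paper offers only \eqref{eq:resolvent-decomposition}, plus the remark that the post-stopping term vanishes under killing or invariance and that invariance is not available in general. Your strong Markov rewriting gives an exact criterion: \eqref{eq:false-unrestricted} holds at $x\in\mathcal C$ if and only if $\mathbb E_x[e^{-r\tau_{\mathcal D}}R_r^{\mathrm R}(\Gamma\mathbf 1_{\mathcal C})(X_{\tau_{\mathcal D}})]=0$. This also shows that killing and invariance are sufficient but not necessary.

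The gap is in the part that goes beyond the paper, namely your proof that this term is nonzero. The sign you rely on is not available. At an interior point $z\in\mathcal D\setminus\Delta$ one always has $\Gamma^{\rm ac}(z)\ge0$. Otherwise $\mathcal LG-rG-c>0$ on a small ball around $z$ on which $G$ is affine, and stopping at the exit time of that ball gives strictly more than $G(z)$, contradicting $V(z)=G(z)$. By continuity of $b$ and $c$, $\Gamma^{\rm ac}$ is therefore close to a nonnegative number on the $\mathcal C$-side near any free-boundary point off $\Delta$. A neighbourhood on which $\Gamma^{\rm ac}<0$ exists only in the degenerate case where $\partial\mathcal D$ runs along the zero set of $\Gamma^{\rm ac}$. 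The paper's baseline model shows the typical picture: $\Gamma^{\rm ac}(0,x_2)=0.75x_2-1.15>0$ for all $x_2\in(1.54,b(0))$, with $b(0)\approx1.88$. Continuation near the boundary pays off for nonlocal reasons, namely the negative diagonal measure and the negative part of $\Gamma$ deeper inside $\mathcal C$. It is not due to a local sign of $\Gamma^{\rm ac}$.

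Even granting a local sign near $z$, it would not determine the sign of $R_r^{\mathrm R}(\Gamma\mathbf 1_{\mathcal C})(z)$. That quantity integrates $\Gamma\mathbf 1_{\mathcal C}$ along the entire unkilled path from $z$. By the same support argument you invoke, that path reaches every part of $\mathcal C$, including regions where $\Gamma$ has the opposite sign. Your remark that $\Gamma^\Delta\le0$ only rules out cancellation coming from the diagonal part. Letting $x\to z$ localizes the pre-$\tau_{\mathcal D}$ path and the exit point $X_{\tau_{\mathcal D}}$, but not the post-stopping integral. So it only reduces the claim to $R_r^{\mathrm R}(\Gamma\mathbf 1_{\mathcal C})(z)\neq0$, which is exactly what remains unproved. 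The one-dimensional reflected Brownian illustration is only announced. Carried out explicitly, including the computation of the unrestricted potential at the hitting point, it would be the kind of concrete witness needed, though it lies outside the two-dimensional max-payoff setting of the statement.

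To make ``generally false'' rigorous, you need a concrete model in which $R_r^{\mathrm R}(\Gamma\mathbf 1_{\mathcal C})$ is shown to be nonzero on the part of $\partial\mathcal D$ charged by the law of $X_{\tau_{\mathcal D}}$. The paper supplies this only numerically, via $W_{\rm unres}-G\ge0.014$ on $\mathcal D$ in the OU example. Otherwise you should stop at the paper's level: the decomposition plus the observation that nothing forces the post-stopping term to vanish.
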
 

\begin{proof} 
By \eqref{eq:resolvent-decomposition}, 
\[ R_r^{\mathrm R}(\Gamma\mathbf 1_{\mathcal C})(x) = R_r^{\mathcal C}\Gamma(x) + \mathbb E_x \left[ \int_{\tau_{\mathcal D}}^\infty e^{-rs}\mathbf 1_{\mathcal C}(X_s)\,dA_s^\Gamma \right]. \] 
The second term vanishes only under additional structural conditions, for example if the process is killed at \(\tau_{\mathcal D}\), or if 
$\mathbf 1_{\mathcal C}(X_s)=0$ for all $s\ge\tau_{\mathcal D}$, $\mathbb P_x$ a.s. 
This invariance property is not available for a general reflected diffusion in the quadrant. Therefore the unrestricted resolvent includes post-stopping occupation of \(\mathcal C\), whereas the optimal stopping problem terminates at \(\tau_{\mathcal D}\). Hence the unrestricted formula is not generally valid. 
\end{proof} 

\begin{remark}[Free-boundary consequence] \label{rem:free-boundary-consequence} 
The failure of the unrestricted resolvent is not merely a representation issue. It changes the boundary equation. If \(\mathcal D_b\) is a candidate stopping region with continuation set \(\mathcal C_b\), then the correct boundary condition is the killed-potential trace equation 
\[
\lim_{\substack{x\to z\\ x\in\mathcal C_b}} R_r^{\mathcal C_b}\Gamma(x)=0, \qquad z\in\partial\mathcal C_b . 
\]
Replacing \(R_r^{\mathcal C_b}\) by \(R_r^{\mathrm R}\) generally produces a different trace and therefore a different boundary. This is why killing at the candidate stopping set is essential for both the analysis and the computation of the free boundary. 
\end{remark}  

\section{Free-boundary geometry and verification} \label{sec:free-boundary}  

\subsection{Epigraph representation of the stopping set} \label{subsec:epigraph-representation} 

We now translate the killed-resolvent representation of Section~\ref{sec:measure-gain} into a free-boundary formulation. Recall that 
\[ 
H:=V-G,\qquad \mathcal C:=\{x\in\mathbb R_+^2:H(x)>0\}, \qquad \mathcal D:=\{x\in\mathbb R_+^2:H(x)=0\}. 
\] 
Since \(V\ge G\), the function \(H\) is nonnegative. The stopping set \(\mathcal D\) is the contact set between the value and the obstacle, while \(\mathcal C\) is the continuation set. In two-dimensional optimal stopping problems, the stopping set need not have a simple graph representation without additional structure. We therefore state the epigraph property as a structural theorem under explicit monotonicity hypotheses. These hypotheses should be verified in concrete models, or imposed as part of a candidate-boundary verification procedure. 

\begin{assumption}[Vertical structure of the stopping advantage] \label{ass:vertical-structure} 
For every \(x_1\ge0\), the map \[ x_2\longmapsto H(x_1,x_2) \] is nonincreasing on \([0,\infty)\). Moreover, every vertical stopping section is nonempty; that is, 
$\{x_2\ge0:(x_1,x_2)\in\mathcal D\}\neq\varnothing$ for every $x_1\ge0$. 
\end{assumption} 
\begin{theorem}[Epigraph representation] \label{thm:epigraph-representation} 
Assume that \(V\) is continuous and that Assumption~\ref{ass:vertical-structure} holds. Then there exists a lower semicontinuous function 
$b:[0,\infty)\to[0,\infty]$ 
such that 
\begin{equation} \label{eq:D-epigraph} 
\mathcal D = \{(x_1,x_2)\in\mathbb R_+^2:x_2\ge b(x_1)\}. 
\end{equation} 
Equivalently, 
\begin{equation} \label{eq:C-subgraph} 
\mathcal C = \{(x_1,x_2)\in\mathbb R_+^2:0\le x_2<b(x_1)\}. 
\end{equation} 
\end{theorem}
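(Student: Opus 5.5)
We define the boundary function as the first stopping height in each vertical section,
\[
b(x_1):=\inf\{x_2\ge 0:(x_1,x_2)\in\mathcal D\}=\inf\{x_2\ge0: H(x_1,x_2)=0\},
\]
and then verify \eqref{eq:D-epigraph}, the complementary description \eqref{eq:C-subgraph}, and lower semicontinuity in turn. By the nonemptiness part of Assumption~\ref{ass:vertical-structure}, the infimum is over a nonempty set, so in fact $b(x_1)<\infty$. The value $+\infty$ in the codomain is therefore harmless and is only kept for generality.

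For the epigraph identity, fix $x_1\ge0$. Since $V$ and $G$ are continuous, $H$ is continuous, so the section $\{x_2\ge 0:H(x_1,x_2)=0\}$ is closed and the infimum $b(x_1)$ is attained, giving $(x_1,b(x_1))\in\mathcal D$. If $x_2\ge b(x_1)$, vertical monotonicity and $H\ge0$ give $0\le H(x_1,x_2)\le H(x_1,b(x_1))=0$, so $(x_1,x_2)\in\mathcal D$. Conversely, if $(x_1,x_2)\in\mathcal D$, then $x_2\ge b(x_1)$ by the definition of the infimum. This proves \eqref{eq:D-epigraph}. Because $\mathcal C$ and $\mathcal D$ partition $\mathbb R_+^2$ (recall $H\ge 0$ by \eqref{eq:value-dominates-payoff}), taking complements yields \eqref{eq:C-subgraph}.

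For lower semicontinuity, take $x_1^n\to x_1$ and set $\ell:=\liminf_n b(x_1^n)$. We must show $b(x_1)\le\ell$, and we may assume $\ell<\infty$. Pass to a subsequence along which $b(x_1^n)\to\ell$. Each point $(x_1^n,b(x_1^n))$ lies in $\mathcal D=\{H=0\}$, which is closed in $\mathbb R_+^2$ by continuity of $H$. Hence the limit $(x_1,\ell)$ lies in $\mathcal D$, and the definition of $b$ gives $b(x_1)\le \ell$. Equivalently, the epigraph of $b$ coincides with the closed set $\mathcal D$, and a function with closed epigraph is lower semicontinuous.

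No step is a genuine obstacle; the argument is elementary once continuity of $V$ is available. The only points needing care are using closedness of $\mathcal D$, which comes from continuity of $V$ and $G$, to ensure the infimum is attained. This is what gives the non-strict inequality $x_2\ge b(x_1)$ in \eqref{eq:D-epigraph} rather than a strict one, and it is also the source of lower semicontinuity of $b$, as opposed to upper semicontinuity.
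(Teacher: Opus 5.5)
Your proposal is correct and follows essentially the same argument as the paper. Both define $b(x_1)$ as the infimum of the vertical stopping section, use vertical monotonicity together with $H\ge 0$ to show each section is upward closed, use closedness of $\mathcal D=\{H=0\}$ to see that the infimum is attained, take complements to get the subgraph form of $\mathcal C$, and derive lower semicontinuity from closedness of the epigraph. You also spell out the sequential lower-semicontinuity argument and observe that $b<\infty$ by nonemptiness; the paper leaves both points implicit.
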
 

\begin{proof} 
For each \(x_1\ge0\)\,define 
$\displaystyle b(x_1):=\inf\{x_2\ge0:H(x_1,x_2)=0\}$. 
The set inside the infimum is nonempty by Assumption~\ref{ass:vertical-structure}. Since \(H\ge0\) and \(x_2\mapsto H(x_1,x_2)\) is nonincreasing, once \(H(x_1,x_2)=0\) holds at some height \(x_2\), it also holds for every \(y_2\ge x_2\). Hence the vertical section of the stopping set is an interval of the form 
$[b(x_1),\infty)$. 
Continuity of \(V\) and \(G\) implies continuity of \(H\), so the contact set \(\mathcal D=\{H=0\}\) is closed. Therefore \(b(x_1)\) itself belongs to the vertical stopping section. This gives \eqref{eq:D-epigraph}. Since \(\mathcal C=\{H>0\}\), \eqref{eq:C-subgraph} follows. The lower semicontinuity of \(b\) follows from the closedness of the epigraph \(\mathcal D\). 
\end{proof} 

The next statement records a simple monotonicity consequence. It is not used in the killed-resolvent representation itself, but it is useful when reducing the free-boundary search to a smaller class of admissible curves. 
\begin{assumption}[Horizontal monotonicity] \label{ass:horizontal-monotonicity} 
The stopping advantage \(H\) is monotone in the horizontal coordinate in the following sense: for every \(x_2\ge0\), the map \[ x_1\longmapsto H(x_1,x_2) \] is nondecreasing on \([0,\infty)\). 
\end{assumption} 

\begin{corollary}[Monotonicity of the boundary] \label{cor:boundary-monotonicity} 
Assume that the hypotheses of Theorem~\ref{thm:epigraph-representation} hold. If, in addition, Assumption~\ref{ass:horizontal-monotonicity} holds, then the boundary \(b\) is nondecreasing. If the horizontal monotonicity in Assumption~\ref{ass:horizontal-monotonicity} is reversed, then \(b\) is nonincreasing. 
\end{corollary}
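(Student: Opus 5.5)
The plan is to work directly with the characterization $b(x_1)=\inf\{x_2\ge0:H(x_1,x_2)=0\}$ from the proof of Theorem~\ref{thm:epigraph-representation}. That theorem also gives the vertical sections of $\mathcal D$ as $[b(x_1),\infty)$. Monotonicity of $b$ then reduces to an inclusion between vertical stopping sections at two abscissae $x_1<x_1'$.

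For the first claim, fix $x_1<x_1'$ and any $x_2\ge b(x_1')$, so that $(x_1',x_2)\in\mathcal D$ and $H(x_1',x_2)=0$. Assumption~\ref{ass:horizontal-monotonicity} gives $0\le H(x_1,x_2)\le H(x_1',x_2)=0$, where the lower bound $H\ge0$ is the domination property \eqref{eq:value-dominates-payoff}. Hence $(x_1,x_2)\in\mathcal D$. The vertical section at $x_1'$ is therefore contained in the one at $x_1$, that is $[b(x_1'),\infty)\subseteq[b(x_1),\infty)$, and taking infima gives $b(x_1)\le b(x_1')$, so $b$ is nondecreasing.

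For the reversed case the same argument runs with the roles swapped. If $x_1\mapsto H(x_1,x_2)$ is nonincreasing and $H(x_1,x_2)=0$, then $0\le H(x_1',x_2)\le H(x_1,x_2)=0$. This gives $[b(x_1),\infty)\subseteq[b(x_1'),\infty)$, hence $b(x_1')\le b(x_1)$.

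The only point needing care is the value $b=+\infty$ allowed in Theorem~\ref{thm:epigraph-representation}, where the sections would be empty. However, Assumption~\ref{ass:vertical-structure} guarantees every section is nonempty, so $b$ is finite everywhere. In any case, the set-inclusion argument remains valid under the convention $\inf\varnothing=+\infty$. I therefore expect no real obstacle: the proof is a two-line comparison, and the only inputs are $H\ge0$ and the interval structure of the sections.
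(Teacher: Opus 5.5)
Your proof is correct. The paper states this corollary without a written proof, as an immediate consequence of the epigraph representation. Your argument supplies exactly that consequence: $0\le H(x_1,x_2)\le H(x_1',x_2)=0$ (or the mirror inequality in the reversed case) gives inclusion of the vertical stopping sections, and comparing infima of $\{x_2\ge 0: H(x_1,x_2)=0\}$ then gives the monotonicity of $b$.
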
 

\begin{remark}[No automatic epigraph theorem] \label{rem:no-automatic-epigraph} 
Theorem~\ref{thm:epigraph-representation} should not be read as saying that every reflected max-payoff stopping problem has an epigraph stopping set. The epigraph geometry is a consequence of monotonicity of the stopping advantage and nonemptiness of vertical stopping sections. These are structural hypotheses, not automatic consequences of reflection or convexity of \(G\). This distinction is important for the verification theorem below. \end{remark} 

\subsection{The killed-resolvent trace condition} \label{subsec:killed-trace-condition} 

Assume now that the stopping set has epigraph form 
$\displaystyle \mathcal D=\{(x_1,x_2):x_2\ge b(x_1)\}$ and let 
$\displaystyle \mathcal C=\mathbb R_+^2\setminus\mathcal D$. 
The killed-resolvent representation of Theorem~\ref{thm:killed-resolvent-representation} gives \[ V(x)=G(x)-R_r^{\mathcal C}\Gamma(x), \qquad x\in\mathcal C. \] Since \(V=G\) on the boundary of the stopping set, the killed potential must have zero continuation-side trace at the free boundary: 
\begin{equation} \label{eq:true-trace-condition} 
\lim_{\substack{x\to z\\ x\in\mathcal C}} R_r^{\mathcal C}\Gamma(x)=0, \qquad z\in\partial\mathcal C\cap\partial\mathcal D . 
\end{equation} 
Equivalently, 
\[
\lim_{\substack{x\to z\\ x\in\mathcal C}} \big(G(x)-R_r^{\mathcal C}\Gamma(x)\big) = G(z), \qquad z\in\partial\mathcal C\cap\partial\mathcal D.
\] 
This is the free-boundary equation associated with the problem. In one-dimensional diffusion problems, analogous conditions often reduce to Volterra or Fredholm integral equations involving transition densities. In the present reflected two-dimensional problem, the natural object is instead the killed resolvent of a signed measure. Thus the boundary equation is a nonlocal killed-potential trace condition. For a graph boundary, the trace condition reads 
$\displaystyle \lim_{\varepsilon\downarrow0} R_r^{\mathcal C}\Gamma \big(x_1,b(x_1)-\varepsilon\big) = 0$ 
whenever the point \((x_1,b(x_1))\) lies away from exceptional boundary singularities. In terms of the value candidate, this is 
\[
\lim_{\varepsilon\downarrow0} U\big(x_1,b(x_1)-\varepsilon\big) = G(x_1,b(x_1)), \qquad U:=G-R_r^{\mathcal C}\Gamma . 
\]

\subsection{Candidate boundaries} \label{subsec:candidate-boundaries} 

We now formulate the verification problem for a candidate boundary. Let \(b:[0,\infty)\to[0,\infty]\) be a locally Lipschitz function and define 
\[
\mathcal D_b:=\{(x_1,x_2)\in\mathbb R_+^2:x_2\ge b(x_1)\}, \qquad \mathcal C_b:=\mathbb R_+^2\setminus\mathcal D_b . 
\]
The corresponding first-entry time is 
$\displaystyle \tau_b:=\inf\{t\ge0:X_t\in\mathcal D_b\}$. 
Let \(R_r^{\mathcal C_b}\) denote the \(r\)-resolvent of the reflected diffusion killed at \(\tau_b\). We define the candidate value 
\begin{equation} \label{eq:Ub-definition} 
U_b(x):=G(x)-R_r^{\mathcal C_b}\Gamma(x). 
\end{equation} 
The candidate boundary equation is the continuation-side trace condition 
\begin{equation} \label{eq:candidate-trace} 
\lim_{\substack{x\to z\\ x\in\mathcal C_b}} R_r^{\mathcal C_b}\Gamma(x)=0, \qquad z\in\partial\mathcal C_b . 
\end{equation} 
Equivalently, 
\[
\lim_{\substack{x\to z\\ x\in\mathcal C_b}} U_b(x)=G(z), \qquad z\in\partial\mathcal C_b . 
\]
A boundary satisfying \eqref{eq:candidate-trace} is only a candidate. The trace condition alone is not a uniqueness principle. It must be supplemented by global admissibility and superharmonicity conditions. The verification theorem below provides these conditions. We shall use the following terminology. 

\begin{definition}[Admissible candidate value] \label{def:admissible-candidate-value} 
Let \(b\) be locally Lipschitz and define \(U_b\) by \eqref{eq:Ub-definition}. We say that \(U_b\) is an admissible candidate value if the following hold: 
\begin{enumerate}[label=(\roman*)] 
\item \(U_b\) is continuous on \(\mathbb R_+^2\); 
\item \(U_b\) belongs locally to the generalized It\^{o} class on \(\mathbb R_+^2\); 
\item \(U_b\) has growth controlled by the Lyapunov function of Assumption~\ref{ass:lyapunov}; 
\item the reflected Neumann condition holds on the coordinate faces in the trace sense: 
\[ 
\partial_i U_b=0 \qquad\text{on }\{x_i=0\},\quad i=1,2; 
\] 
\item the stopped process \[ \left\{ e^{-r(t\wedge\tau_b)}U_b(X_{t\wedge\tau_b}) - \int_0^{t\wedge\tau_b}e^{-rs}c(X_s)\,ds \right\}_{t\ge0} \] is of class \(D\) after localization, so that optional sampling is valid. \end{enumerate} 
\end{definition} 
The generalized It\^{o} class is meant to include functions whose second derivatives may contain measure-valued parts. This is necessary because both the obstacle \(G\) and the killed-potential term may carry singular contributions at the diagonal and at the free boundary. 

\subsection{Verification theorem} \label{subsec:verification-theorem} 

We now state the main verification theorem. It converts a candidate boundary into a verified optimizer. \begin{theorem}[Verification of a candidate boundary] \label{thm:verification-boundary} Let \(b\) be a locally Lipschitz boundary and define 
\[ \mathcal D_b=\{(x_1,x_2):x_2\ge b(x_1)\}, \qquad \mathcal C_b=\mathbb R_+^2\setminus\mathcal D_b. \] 
Set 
$\displaystyle U_b=G-R_r^{\mathcal C_b}\Gamma$.  
Assume that: 
\begin{enumerate}[label=(\roman*)]
\item \(U_b\ge G\) on \(\mathbb R_+^2\); 
\item \(U_b=G\) on \(\mathcal D_b\); 
\item \(U_b>G\) on \(\mathcal C_b\); 
\item \(U_b\) satisfies the reflected Neumann condition on the coordinate axes; 
\item \((r-\mathcal L)U_b+c\ge0\) in the sense of measures on \(\mathbb R_+^2\); 
\item \(U_b\) has admissible growth and belongs to the generalized It\^{o} class in the sense of Definition~\ref{def:admissible-candidate-value}. 
\end{enumerate} 
Then 
$U_b(x)=V(x)$ for $x\in\mathbb R_+^2$, 
and 
$\tau_b=\inf\{t\ge0:X_t\in\mathcal D_b\}$ 
is an optimal stopping time. 
\end{theorem}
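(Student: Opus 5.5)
The proof splits into two inequalities: \(U_b\ge V\) from superharmonicity, and \(U_b\le V\) from attainment by \(\tau_b\).

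\emph{Upper bound \(V\le U_b\).} Fix \(x\in\mathbb R_+^2\) and an arbitrary stopping time \(\tau\in\mathcal T\). By (vi), \(U_b\) lies in the generalized It\^{o} class, so the measure-valued It\^{o}--Tanaka formula of Lemma~\ref{lem:ito-max-payoff} applies to \(U_b\). I would write the second-order part as an additive functional associated with the measure \((\mathcal L-r)U_b\); this measure may contain singular pieces on \(\Delta\) and on the graph of \(b\). This gives
\[
e^{-rt}U_b(X_t)=U_b(x)+\int_0^t e^{-rs}\,dA_s^{(\mathcal L-r)U_b}+M_t+\sum_{i=1}^2\int_0^t e^{-rs}\partial_iU_b(X_s)\,dL_s^i .
\]
The reflection terms vanish by the Neumann condition (iv), interpreted in the trace sense; compare Remark~\ref{rem:boundary-terms}. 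Condition (v) states that the measure \((r-\mathcal L)U_b+c\) is nonnegative. Its additive functional is therefore nondecreasing, and hence
\[
e^{-r t}U_b(X_{t})-\int_0^{t}e^{-rs}c(X_s)\,ds
\]
is a local supermartingale. I would localize with exit times \(\rho_n\) of large balls and apply optional sampling at \(\tau\wedge t\wedge\rho_n\). Then \(n\to\infty\) and \(t\to\infty\) are passed to the limit using the Lyapunov growth (iii) of Definition~\ref{def:admissible-candidate-value} together with \eqref{eq:lyapunov-bound}--\eqref{eq:discounted-lyapunov}. These give uniform integrability, and \(r>\lambda_\Psi\) gives \(e^{-rt}U_b(X_t)\to0\) in \(L^1\). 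Using (i), \(U_b\ge G\), we obtain \(U_b(x)\ge\mathbb E_x[e^{-r\tau}G(X_\tau)-\int_0^\tau e^{-rs}c\,ds]\). Taking the supremum over \(\tau\) gives \(U_b\ge V\).

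\emph{Lower bound \(V\ge U_b\).} Here I would avoid differentiating \(U_b\) and instead use its definition directly. Apply the generalized It\^{o} formula for \(G\) up to \(\tau_b\), exactly as in the proof of Theorem~\ref{thm:killed-resolvent-representation}, but with \(\mathcal C_b\) in place of \(\mathcal C\). This gives
\[
\mathbb E_x\Big[e^{-r\tau_b}G(X_{\tau_b})-\int_0^{\tau_b}e^{-rs}c(X_s)\,ds\Big]=G(x)-R_r^{\mathcal C_b}\Gamma(x)=U_b(x),
\]
with the same localization argument for the martingale part. On \(\{\tau_b=\infty\}\) the discounted payoff is read as \(0\), consistent with the growth bounds. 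Condition (ii) gives \(G(X_{\tau_b})=U_b(X_{\tau_b})\) by closedness of \(\mathcal D_b\), since \(b\) is continuous. So the display is the reward of \(\tau_b\), and hence \(V(x)\ge U_b(x)\). Alternatively, one can run the argument of the first step on \([0,\tau_b]\). On \(\mathcal C_b\), condition (iii) together with the representation forces \((r-\mathcal L)U_b+c=0\), so the stopped process is a true martingale by (v) of Definition~\ref{def:admissible-candidate-value}, and (ii) identifies the terminal value. Combining the two bounds gives \(U_b=V\) and shows \(\tau_b\) attains the supremum. Then (iii) also gives \(\mathcal C_b=\mathcal C\).

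\emph{Main obstacle.} The delicate step is justifying the It\^{o} formula for \(U_b\) when \((\mathcal L-r)U_b\) is only a measure. This measure can charge \(\Delta\), the free boundary, and the corners \(\{x_i=0\}\cap\Delta\), and the boundary-face terms must be shown to vanish there. I would first try mollifying \(U_b\) in the interior, away from a neighborhood of the corner points, where local ellipticity holds. Convergence of the additive functionals would then be controlled through Revuz-measure continuity, and the corner points handled by noting that the reflected process spends zero \(dL\)-time at finitely many points. Convergence of the superharmonicity measures under mollification preserves the sign in (v), which is what the supermartingale argument needs.
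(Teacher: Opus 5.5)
Your upper bound $V\le U_b$ is essentially the paper's argument: It\^o's formula for $U_b$, the Neumann condition to remove the $dL^i$ terms, (v) to get a supermartingale, optional sampling, and then $U_b\ge G$. When you let $t\to\infty$ you do not actually need uniform integrability, which a fixed-time Lyapunov bound would not give over arbitrary stopping times anyway. Fatou's lemma suffices because $U_b\ge G\ge 0$.

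For the lower bound you take a genuinely different route. The paper applies It\^o's formula to $U_b$ on $[0,\tau_b]$. For that it needs $(\mathcal L-r)U_b=c$ weakly on $\mathcal C_b$, the class $D$ property, and $U_b=G$ on $\mathcal D_b$. You instead apply the generalized It\^o identity to $G$ up to $\tau_b$. By the very definition of $R_r^{\mathcal C_b}$, this shows that $U_b(x)$ \emph{is} the expected reward of $\tau_b$, so $U_b\le V$ follows without differentiating $U_b$ at all. This buys a good deal:
\begin{itemize}
\item The inequality $U_b\le V$ holds for every boundary $b$.
\item Hypothesis (ii) becomes automatic: for $x\in\mathcal D_b$ one has $\tau_b=0$, hence $R_r^{\mathcal C_b}\Gamma(x)=0$. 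Your invocation of (ii) at that step is therefore superfluous.
\item Hypothesis (iii) is needed only to identify $\mathcal C_b$ with $\mathcal C$.
\item The martingale property of the stopped process follows directly from the strong Markov property. So in your alternative route it is not (iii) that forces $(r-\mathcal L)U_b+c=0$ on $\mathcal C_b$; that comes from the resolvent construction, or from this martingale property.
\end{itemize}
The paper's route is the template one needs when $U_b$ is produced analytically, e.g.\ as a PDE solution, rather than as an expectation.

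The one ingredient your route uses without checking is the reflection term $B^G$ of Lemma~\ref{lem:ito-max-payoff}. The proof of Theorem~\ref{thm:killed-resolvent-representation} drops it silently, and Remark~\ref{rem:boundary-terms} says it need not vanish. So ``exactly as in that proof'' is not by itself a justification.

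Here the term does vanish off the corner. On $\{x_1=0,\,x_2>0\}$ one has $G=\alpha x_2$ nearby, so $\partial_1G=0$. On $\{x_2=0,\,x_1>0\}$ one has $G=x_1$ nearby, so $\partial_2G=0$. At the corner $(0,0)\in\Delta$, however, the one-sided derivatives are $1$ and $\alpha$. You therefore also need that $dL^1$ and $dL^2$ do not charge $\{X_s=0\}$, which is a Reiman--Williams type boundary property.

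With that property, and the integrability of $A^\Gamma$ up to $\tau_b$ (implicit in $U_b$ being well defined), your argument is complete at the paper's level of rigor.
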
 

\begin{proof} 
We split the proof into the usual super-solution and attainability steps. First, we prove that \(U_b\) dominates the value. Since 
$(r-\mathcal L)U_b+c\ge0$ in the sense of measures, the generalized It\^{o} formula gives, after localization, 
\[
e^{-r(t\wedge\tau)}U_b(X_{t\wedge\tau}) = U_b(x) + \int_0^{t\wedge\tau} e^{-rs} \big(\mathcal L U_b-rU_b\big)(X_s)\,ds + M_{t\wedge\tau} + A_{t\wedge\tau}^{\rm sing},
\]
where \(M\) is a local martingale and \(A^{\rm sing}\) denotes the measure-valued contribution. The measure inequality 
$\displaystyle (r-\mathcal L)U_b+c\ge0$ 
is equivalent to  
$\displaystyle \mathcal L U_b-rU_b\le c $
in the weak sense. The reflected Neumann condition eliminates the boundary local-time terms on the coordinate axes. Hence 
$\displaystyle e^{-r(t\wedge\tau)}U_b(X_{t\wedge\tau}) - \int_0^{t\wedge\tau}e^{-rs}c(X_s)\,ds $
is a supermartingale. By optional sampling and the growth condition, for any admissible stopping time \(\tau\), 
\[ 
U_b(x) \ge \mathbb E_x \left[ e^{-r\tau}U_b(X_\tau) - \int_0^\tau e^{-rs}c(X_s)\,ds \right]. 
\] 
Using \(U_b\ge G\), we obtain 
\[ 
U_b(x) \ge \mathbb E_x \left[ e^{-r\tau}G(X_\tau) - \int_0^\tau e^{-rs}c(X_s)\,ds \right]. 
\] 
Taking the supremum over all stopping times gives 
$U_b(x)\ge V(x)$. 
Second, we prove the reverse inequality. Let 
$\tau_b=\inf\{t\ge0:X_t\in\mathcal D_b\}$. 
On \(\mathcal C_b\), the definition 
$U_b=G-R_r^{\mathcal C_b}\Gamma$ 
and the killed-resolvent construction imply 
$\mathcal L U_b-rU_b-c=0 $
in the continuation region in the weak sense. Therefore, the process 
$\displaystyle e^{-r(t\wedge\tau_b)}U_b(X_{t\wedge\tau_b}) - \int_0^{t\wedge\tau_b}e^{-rs}c(X_s)\,ds$ 
is a martingale after localization. Letting \(t\to\infty\), using the admissible growth condition and the class \(D\) property, yields 
\[ 
U_b(x) = \mathbb E_x \left[ e^{-r\tau_b}U_b(X_{\tau_b}) - \int_0^{\tau_b}e^{-rs}c(X_s)\,ds \right]. 
\] 
Since \(X_{\tau_b}\in\mathcal D_b\) and \(U_b=G\) on \(\mathcal D_b\), we get 
\[ 
U_b(x) = \mathbb E_x \left[ e^{-r\tau_b}G(X_{\tau_b}) - \int_0^{\tau_b}e^{-rs}c(X_s)\,ds \right] \le V(x). 
\] 
Combining the two inequalities gives \(U_b=V\), and the displayed equality shows that \(\tau_b\) is optimal. 
\end{proof} 

\begin{remark}[Verification rather than formal derivation] \label{rem:verification-not-formal} 
Theorem~\ref{thm:verification-boundary} is the step that turns the killed resolvent identity into an optimization result. The trace equation \eqref{eq:candidate-trace} may generate candidate boundaries, but only the majorization, contact, reflection, growth, and measure-superharmonicity conditions certify optimality. Thus the framework is not merely conditional on knowing the true stopping set; it provides a checklist for verifying a proposed free boundary. 
\end{remark} 

\begin{remark}[Role of strict continuation] \label{rem:strict-continuation} 
The condition \(U_b>G\) on \(\mathcal C_b\) prevents spurious continuation regions. Without it, one may have \(U_b=G\) on a subset of \(\mathcal C_b\), so that the candidate boundary does not coincide with the true contact set. This condition is also useful when proving uniqueness of a verified boundary within a prescribed epigraph class. 
\end{remark}  

\section{Boundary computation and examples} \label{sec:computation}  

\subsection{Killed-resolvent Picard iteration} \label{subsec:picard-killed-resolvent} 

The preceding sections show that a candidate boundary should be tested through the killed-potential trace condition 
\[ 
\lim_{\substack{x\to z\\ x\in\mathcal C_b}} R_r^{\mathcal C_b}\Gamma(x)=0, \qquad z\in\partial\mathcal C_b . 
\] 
In this section we describe a numerical implementation of this condition. The purpose is not to prove convergence of the algorithm in full generality, but to give a computational procedure that is faithful to the killed-resolvent formulation and that can be used to test candidate free-boundary geometries. Let \(I=[0,X_{\max}]\subset[0,\infty)\) be a finite interval in the \(x_1\)-direction. For a locally Lipschitz boundary \(b:I\to[0,\infty)\)\,define 
\[ 
\mathcal D_b = \{(x_1,x_2)\in\mathbb R_+^2:x_2\ge b(x_1)\}, \qquad \mathcal C_b = \mathbb R_+^2\setminus\mathcal D_b . 
\] 
Let 
\[ 
\tau_b:=\inf\{t\ge0:X_t\in\mathcal D_b\} = \inf\{t\ge0:X_t^2\ge b(X_t^1)\}. 
\] 
The killed-potential associated with \(b\) is 
\[ 
\mathcal P_b(x) := R_r^{\mathcal C_b}\Gamma(x) = \mathbb E_x \left[ \int_0^{\tau_b}e^{-rs}\,dA_s^\Gamma \right]. 
\] 
The boundary equation is 
\[ 
\mathcal P_b(x_1,b(x_1)-)=0, \qquad x_1\in I, 
\] 
where the minus sign indicates the trace from the continuation region. This suggests the boundary operator 
\[
\mathcal T[b](x_1) := \text{the value of }y\text{ solving } R_r^{\mathcal C_b}\Gamma(x_1,y-)=0 . 
\]
Starting from an initial boundary \(b^{(0)}\), we define the Picard-type iteration 
\begin{equation} \label{eq:picard-boundary} 
b^{(k+1)}=\mathcal T[b^{(k)}], \qquad k=0,1,2,\ldots . 
\end{equation} 
On a grid 
\[ 
0=x_1^0<x_1^1<\cdots<x_1^N=X_{\max}, 
\] 
the update is performed pointwise by solving  
\[
\widehat{\mathcal P}{b^{(k)}}(x_1^i,y)=0, \qquad i=0,\ldots,N, 
\]
for \(y\), where \(\widehat{\mathcal P}{b^{(k)}}\) is a Monte Carlo approximation of the killed potential. The resulting values define \(b^{(k+1)}(x_1^i)\), and interpolation is used between grid points. We monitor convergence by 
\begin{equation} \label{eq:dk} 
d_k = \frac{ \|b^{(k+1)}-b^{(k)}\|_{L^2(I)} }{ 1+\|b^{(k)}\|_{L^2(I)} } . 
\end{equation} 
The iteration is terminated when \(d_k\) is below a prescribed tolerance \(\varepsilon_{\rm tol}\). In the numerical examples below, the iteration is used as a boundary generator. The candidate boundary is then tested against the verification conditions of Theorem~\ref{thm:verification-boundary}. 

\begin{remark}[Role of the Picard iteration] \label{rem:picard-role} 
Equation \eqref{eq:picard-boundary} is the analogue, in the present two-dimensional reflected setting, of the Picard iterations used for scalar free-boundary integral equations in one-dimensional optimal stopping. The difference is that the operator here is not a Volterra operator expressed through a one-dimensional transition density. It is a killed-resolvent operator for a signed measure. 
\end{remark} 

\subsection{Monte Carlo approximation of the killed potential} \label{subsec:mc-killed-potential} 

We next describe the pathwise approximation of 
$R_r^{\mathcal C_b}\Gamma(x)$. 
The simulation must include three effects: the absolutely continuous part of \(\Gamma\), the diagonal singular contribution generated by the kink of \(G\), and killing at \(\tau_b\). Let \(0=t_0<t_1<\cdots<t_J=T_{\max}\) be a time grid with step size \(\Delta t\). For a fixed initial condition \(x\), simulate \(M\) independent reflected paths 
\[ 
X^{(m)}=(X^{1,(m)},X^{2,(m)}), \qquad m=1,\ldots,M, 
\] 
until 
\[ 
\tau_b^{(m)} = \inf\{t_j:X_{t_j}^{2,(m)}\ge b(X_{t_j}^{1,(m)})\} 
\] 
or until the terminal truncation time \(T_{\max}\). A simple projected Euler step is 
\begin{equation} \label{eq:projected-euler} 
\widetilde X_{t_{j+1}} = X_{t_j} + b(X_{t_j})\Delta t + \sigma(X_{t_j})\Delta W_j, \qquad X_{t_{j+1}} = \Pi_{\mathbb R_+^2}\widetilde X_{t_{j+1}}, 
\end{equation} 
where \(\Pi_{\mathbb R_+^2}\) denotes projection onto the quadrant. More accurate Skorokhod-map or boundary-local-time schemes can also be used. For the diagonal kink\,define 
$Y_t:=X_t^1-\alpha X_t^2 $. 
The singular measure in Theorem~\ref{thm:measure-valued-gain} corresponds to a local-time contribution at \(Y=0\). Since 
$q(x):=(1,-\alpha)a(x)(1,-\alpha)^\top$, 
the diagonal part of the killed potential is weighted by the coefficient appearing in 
$\displaystyle \Gamma^\Delta(dx) = - \frac{q(x)}{2\sqrt{1+\alpha^2}}\,\sigma_\Delta(dx)$. 
Equivalently, when the diagonal measure is represented through the local time of \(Y\), the same normalization must be used consistently with the occupation density formula. A schematic Monte Carlo estimator is 
\begin{align} \label{eq:MC-estimator} 
\widehat{\mathcal P}b(x) &= \frac1M \sum_{m=1}^M \bigg[ \sum_{t_j<\tau_b^{(m)}} e^{-rt_j} \Gamma^{\rm ac}(X_{t_j}^{(m)})\Delta t \nonumber\\ 
&\hspace{3.8cm} - \frac12 \sum_{t_j<\tau_b^{(m)}} e^{-rt_j} \widehat \omega_j^{(m)} \Delta \widehat L_{t_j}^{0,(m)}(Y) \bigg], 
\end{align}
where \(\widehat L^0(Y)\) is a numerical approximation of the local time of \(Y\) at zero and \(\widehat\omega_j^{(m)}\) is the coefficient matching the normalization of the diagonal measure. In the simplest case, this coefficient can be taken directly from \(q(X_{t_j}^{(m)})\) and the surface measure normalization. More generally, it is computed from the identity between the diagonal Revuz measure and the local-time additive functional. A practical approximation of local time is obtained from the occupation density formula: 
\begin{equation} \label{eq:local-time-approx} 
\Delta \widehat L_{t_j}^0(Y) \approx \frac{1}{2\varepsilon} \mathbf 1_{\{|Y_{t_j}|\le\varepsilon\}} \Delta \langle Y\rangle_{t_j}, \qquad \Delta\langle Y\rangle_{t_j} \approx q(X_{t_j})\Delta t , 
\end{equation} 
with a bandwidth \(\varepsilon>0\). The bandwidth should decrease with \(\Delta t\), but not so quickly that the local-time estimator becomes too noisy. The following algorithm summarizes the computation. 
\begin{algorithm}[htbp] \caption{Killed-resolvent boundary iteration} \label{alg:killed-picard} 
\begin{algorithmic}[1] 
\State Choose a spatial grid \(\{x_1^i\}_{i=0}^N\), an initial boundary \(b^{(0)}\), a time step \(\Delta t\), and a tolerance \(\varepsilon_{\rm tol}\). 
\For{\(k=0,1,2,\ldots\)} 
\For{\(i=0,\ldots,N\)} 
\State For trial values \(y\), simulate reflected paths from \(x=(x_1^i,y)\) and kill them at \(\tau_{b^{(k)}}\). 
\State Estimate \(\widehat{\mathcal P}{b^{(k)}}(x_1^i,y)\) using \eqref{eq:MC-estimator}. 
\State Solve \[ \widehat{\mathcal P}{b^{(k)}}(x_1^i,y)=0 \] and set \(b^{(k+1)}(x_1^i)=y\). 
\EndFor 
\State Compute \(d_k\) from \eqref{eq:dk}. 
\If{\(d_k<\varepsilon_{\rm tol}\)} 
\State Stop. 
\EndIf 
\EndFor 
\end{algorithmic} 
\end{algorithm} 

\subsection{Effect of the diagonal singular measure} \label{subsec:diagonal-effect} 

The first diagnostic experiment isolates the contribution of the diagonal singular measure. Compute two candidate boundaries. The first boundary\,denoted \(b_{\rm full}\), uses the full measure-valued stopping gain 
$\Gamma = \Gamma^{\rm ac}\,dx+\Gamma^\Delta $. 
Thus the estimator includes both the absolutely continuous integral and the local-time contribution in \eqref{eq:MC-estimator}. The second boundary\,denoted \(b_{\rm ac}\), ignores the diagonal contribution and uses only the pointwise generator on the two smooth regions: 
\[ 
\Gamma_{\rm ac}(dx)=\Gamma^{\rm ac}(x)\,dx . 
\] 
Equivalently, the local-time term is removed from \eqref{eq:MC-estimator}. Comparing \(b_{\rm full}\) and \(b_{\rm ac}\) measures the error caused by treating the nonsmooth payoff as if it were twice differentiable. The expected outcome is that 
$b_{\rm full}\neq b_{\rm ac} $
whenever the process has nontrivial crossing intensity across 
$\Delta=\{x_1=\alpha x_2\}$. 
This comparison directly illustrates why the max payoff leads to a measure-valued obstacle problem. If the diagonal term is ignored, the free-boundary equation is missing the local-time correction generated by the kink of \(G\). A useful reporting metric is the relative boundary discrepancy 
\begin{equation} \label{eq:diagonal-discrepancy} 
E_\Delta = \frac{ \|b_{\rm full}-b_{\rm ac}\|_{L^2(I)} }{ 1+\|b_{\rm full}\|_{L^2(I)} } . 
\end{equation} 
One may also plot the signed difference 
$x_1\longmapsto b_{\rm full}(x_1)-b_{\rm ac}(x_1)$ 
to identify where the kink correction is most influential. 

\subsection{Comparison with the unrestricted-resolvent approximation} \label{subsec:unrestricted-comparison} 

The second diagnostic experiment compares the killed-resolvent boundary with the boundary generated by the unrestricted reflected resolvent. Define 
\[ 
\mathcal P_b^{\rm kill}(x) = R_r^{\mathcal C_b}\Gamma(x) = \mathbb E_x \left[ \int_0^{\tau_b}e^{-rs}\,dA_s^\Gamma \right] 
\] 
and 
\[ 
\mathcal P_b^{\rm unres}(x) = R_r^{\mathrm R}(\Gamma\mathbf 1_{\mathcal C_b})(x) = \mathbb E_x \left[ \int_0^\infty e^{-rs}\mathbf 1_{\mathcal C_b}(X_s)\,dA_s^\Gamma \right]. 
\] 
The killed boundary \(b_{\rm kill}\) solves 
$\mathcal P_{b_{\rm kill}}^{\rm kill}(x_1,b_{\rm kill}(x_1)-)=0$, 
whereas the unrestricted boundary \(b_{\rm unres}\) solves the formally similar but generally incorrect equation 
$\mathcal P_{b_{\rm unres}}^{\rm unres} (x_1,b_{\rm unres}(x_1)-)=0 $. 
The difference between the two potentials is 
\begin{align} \label{eq:post-stopping-bias} 
\mathcal P_b^{\rm unres}(x)-\mathcal P_b^{\rm kill}(x) = \mathbb E_x \left[ \int_{\tau_b}^{\infty} e^{-rs}\mathbf 1_{\mathcal C_b}(X_s)\,dA_s^\Gamma \right]. 
\end{align} 
This is the post-stopping occupation bias. It is generally nonzero because the unrestricted reflected diffusion may return to \(\mathcal C_b\) after first entering \(\mathcal D_b\). The optimal stopping problem, however, terminates at \(\tau_b\), so this contribution must be excluded. In simulations, the unrestricted approximation is obtained by continuing each path after \(\tau_b\) and accumulating future visits to \(\mathcal C_b\). The killed approximation stops the same path at \(\tau_b\). Using common random numbers for the two estimators reduces variance and makes the bias visible. A useful diagnostic is 
$\displaystyle E_{\rm unres} = \frac{ \|b_{\rm kill}-b_{\rm unres}\|_{L^2(I)} }{ 1+\|b_{\rm kill}\|_{L^2(I)} } $. 
A nonzero value of \(E_{\rm unres}\) provides numerical evidence that the unrestricted resolvent is not merely a harmless alternative representation; it generates a different boundary. 

\subsection{Sensitivity to payoff and diffusion parameters} \label{subsec:parameter-sensitivity} 

The third experiment examines how the verified boundary responds to the model parameters. Three variations are especially informative. First, vary the payoff weight \(\alpha\). Since 
$G(x)=x_1\vee\alpha x_2$, 
changing \(\alpha\) rotates the diagonal kink 
$\Delta=\{x_1=\alpha x_2\}$ 
and changes the direction 
$(1,-\alpha) $
that appears in the singular coefficient 
$q(x)=(1,-\alpha)a(x)(1,-\alpha)^\top $. 
Thus the location and strength of the singular contribution both depend on \(\alpha\). Second, vary the covariance structure. For example, in the constant-coefficient case 
$\displaystyle a= 
\begin{pmatrix} 
\sigma_1^2 & \rho\sigma_1\sigma_2\\ 
\rho\sigma_1\sigma_2 & \sigma_2^2 
\end{pmatrix}$, 
the diagonal coefficient becomes 
\begin{equation} \label{eq:q-constant-covariance} 
q = \sigma_1^2 -2\alpha\rho\sigma_1\sigma_2 + \alpha^2\sigma_2^2 . 
\end{equation} 
Hence correlation can either increase or decrease the intensity with which the process accumulates local time at the payoff kink. Third, vary the strength of reflection or the drift near the coordinate axes. This illustrates the role of normal reflection in shaping the boundary. Since reflection prevents exit from \(\mathbb R_+^2\), it changes both the occupation of the continuation region and the probability of hitting the diagonal kink before stopping. For each parameter value, we compute a boundary \(b\) by the killed-resolvent iteration and record $d_k$, $E_\Delta$, and $E_{\rm unres}$. 
The first quantity measures convergence of the boundary iteration. The second measures the effect of the diagonal singular measure. The third measures the post-stopping bias caused by the unrestricted resolvent. 

\paragraph{Conclusion.} 
The killed-resolvent formulation provides a measure-valued free-boundary equation and a verification principle for reflected optimal stopping with nonsmooth payoff. The numerical experiments illustrate that both killing at the candidate stopping set and the diagonal singular measure are essential for obtaining the correct boundary. The computation therefore mirrors the analytic structure of the problem: the boundary is not determined by a pointwise generator equation, but by a killed potential of a signed measure.

\section{Numerical experiments}
\label{sec:numerics}

This section turns the killed-resolvent formulation of
Sections~\ref{sec:measure-gain}--\ref{sec:computation} into concrete
computations. The three diagnostic experiments outlined in
Section~\ref{sec:computation}---the effect of the diagonal singular measure
(\S\ref{subsec:diagonal-effect}), the comparison of killed and unrestricted
resolvents (\S\ref{subsec:unrestricted-comparison}), and parameter
sensitivity (\S\ref{subsec:parameter-sensitivity})---are carried out on an
explicit reflected model.

Two complementary numerical methods are used throughout. The primary method is
a deterministic finite-difference (FD) solver for the reflected variational
inequality \eqref{eq:reflected-obstacle}; it is fast, reproducible, and resolves
the free boundary and the killed potentials to high accuracy. The second method
is the path-space Monte Carlo (MC) estimator of the killed potential described in
\S\ref{subsec:mc-killed-potential}, including the diagonal local-time term
\eqref{eq:local-time-approx}. The MC estimator is used as an independent
cross-check that is faithful to the probabilistic content of
Theorem~\ref{thm:killed-resolvent-representation}. The two methods agree to
discretisation accuracy on every reported quantity.

\subsection{Model specification}
\label{subsec:numerics-model}

We take a normally reflected Ornstein--Uhlenbeck diffusion in the quadrant,
\begin{equation}
\label{eq:numerics-ou}
dX_t^i = \kappa\,(\theta_i - X_t^i)\,dt + \sigma_i\, dW_t^i + dL_t^i,
\qquad i=1,2,
\end{equation}
with constant covariance
$a=\left(\begin{smallmatrix}\sigma_1^2 & \rho\sigma_1\sigma_2\\
\rho\sigma_1\sigma_2 & \sigma_2^2\end{smallmatrix}\right)$.
The mean reversion is essential for the experiments: it produces a genuine
continuation region at low states and, after stopping, lets the process return
toward the continuation set, so that the post-stopping bias of
\S\ref{subsec:unrestricted-comparison} is actually visible. The baseline
parameters are
\begin{equation}
\label{eq:numerics-baseline}
\kappa=0.6,\quad \theta=(2,2),\quad \sigma_1=\sigma_2=0.5,\quad \rho=0,\quad
r=0.15,\quad \alpha=1,\quad c\equiv 0.05,
\end{equation}
on the truncated domain $[0,X_{\max}]^2$ with $X_{\max}=5$. With
\eqref{eq:numerics-ou}, the absolutely continuous stopping gain
\eqref{eq:Gamma-ac-density} is the piecewise-affine function
\[
\Gamma^{\rm ac}(x)= c + rG(x)
- \kappa(\theta_1-x_1)\,\mathbf 1_{\{x_1>\alpha x_2\}}
- \alpha\,\kappa(\theta_2-x_2)\,\mathbf 1_{\{x_1<\alpha x_2\}},
\]
and the diagonal coefficient in \eqref{eq:Gamma-diagonal} reduces to the
constant \eqref{eq:q-constant-covariance},
$q=\sigma_1^2-2\alpha\rho\sigma_1\sigma_2+\alpha^2\sigma_2^2$, equal to
$q=0.5$ at the baseline.

\subsection{Finite-difference scheme}
\label{subsec:numerics-fd}

The generator $\mathcal L-r$ is discretised on a uniform $n\times n$ grid with
spacing $h=X_{\max}/(n-1)$ by a monotone seven-point stencil: the second-order
terms use the standard cross-diagonal discretisation that preserves the
$M$-matrix property for all admissible correlations, and the drift is treated by
node-wise upwinding. The normal reflection condition
\eqref{eq:reflected-obstacle-neumann} is imposed by reflecting ghost values across
the axes, and a far-field Dirichlet condition $V=G$ is set on the outer faces
$\{x_i=X_{\max}\}$. The discrete obstacle problem
\eqref{eq:complementarity} is solved by Howard policy iteration: each step solves
a sparse linear system on the current continuation set with $V=G$ frozen on the
stopping set, and the active set is updated from the complementarity residual.
The iteration is started from $V=G$ and is monotone; it terminates when the
active set no longer changes.

The killed resolvent itself is computed directly. For a candidate continuation
set $\mathcal C_b$, the candidate value $U_b=G-R_r^{\mathcal C_b}\Gamma$ of
\eqref{eq:Ub-definition} is obtained by solving
$\mathcal LU_b-rU_b-c=0$ on $\mathcal C_b$ with the contact data $U_b=G$ on the
boundary $\partial\mathcal C_b$. On the true continuation region this construction
reproduces the variational-inequality value to round-off:
$\|U_b-V\|_\infty < 10^{-12}$ on the baseline grid, a direct numerical
confirmation of the killed-resolvent representation
\eqref{eq:value-killed-resolvent}. The absolutely continuous and singular parts
of the resolvent are computed separately by solving
$(\mathcal L-r)w=-\Gamma^{\rm ac}$ (regular source) and by adding a mollified
diagonal source $-\tfrac{q}{2}\,\delta_\eta(x_1-\alpha x_2)$,
$\delta_\eta(y)=\tfrac1{2\eta}\mathbf 1_{\{|y|\le\eta\}}$, with $\eta=1.5\,h$;
the latter reproduces the full value $V$ to within $4\times10^{-2}$ in the
sup norm, validating the discretisation of the singular line measure
\eqref{eq:Gamma-diagonal}.

Table~\ref{tab:grid} reports grid convergence of two global descriptors: the
maximal stopping advantage $\max H = H(0,0)$ and the boundary height $b(0)$.
Both stabilise as $h\downarrow0$; all subsequent results use $n=201$
($h=0.025$).

\begin{table}[t]
\centering
\caption{Grid convergence of the baseline model \eqref{eq:numerics-baseline}.
$\max H$ is the maximal stopping advantage and $b(0)$ the boundary height above
the origin. ``Howard'' is the number of policy-iteration steps.}
\label{tab:grid}
\begin{tabular}{rccc}
\toprule
$n$ & $h$ & $\max H$ & $b(0)$ \\
\midrule
$51$  & $0.100$ & $1.2399$ & $1.900$ \\
$101$ & $0.050$ & $1.2654$ & $1.850$ \\
$151$ & $0.033$ & $1.2724$ & $1.867$ \\
$201$ & $0.025$ & $1.2757$ & $1.875$ \\
$251$ & $0.020$ & $1.2777$ & $1.860$ \\
\bottomrule
\end{tabular}
\end{table}

\subsection{Value function, free boundary, and validation}
\label{subsec:numerics-value}

Figure~\ref{fig:regions} shows the computed continuation and stopping regions
together with the free boundary $b$ and the diagonal $\Delta=\{x_1=\alpha x_2\}$.
The stopping advantage $H=V-G$ is displayed in Figure~\ref{fig:H}. The
continuation region is the sub-graph $\{0\le x_2<b(x_1)\}$ predicted by
Theorem~\ref{thm:epigraph-representation}; the boundary rises gently from
$b(0)\approx1.88$ to about $2.15$ and then drops to zero near $x_1\approx1.8$,
where the entire vertical section becomes a stopping section. Crucially, the
diagonal $\Delta$ passes through the interior of the continuation region, so the
reflected process crosses the payoff kink with positive intensity before
stopping; this is precisely the regime in which the diagonal singular measure is
active.

\begin{figure}[t]
\centering
\includegraphics[width=0.48\textwidth]{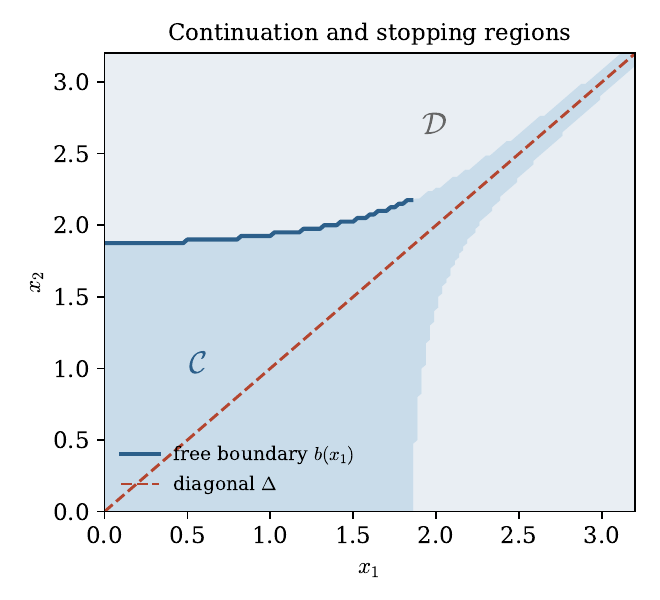}\hfill
\includegraphics[width=0.50\textwidth]{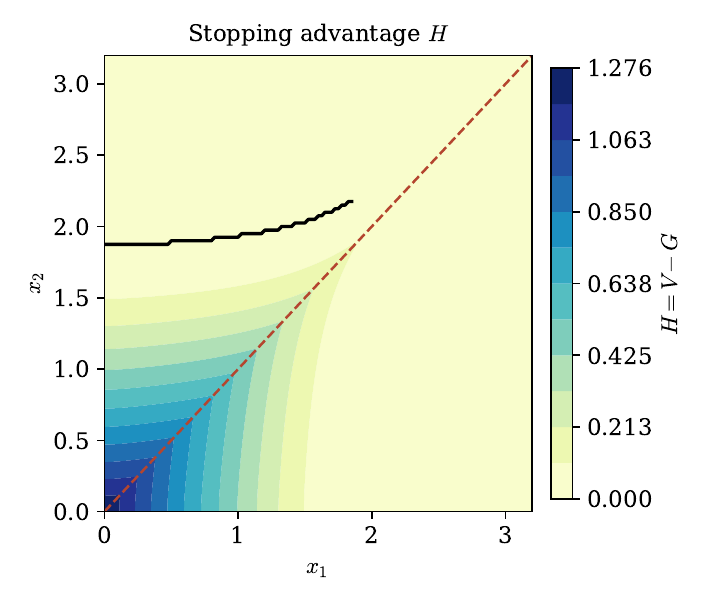}
\caption{Baseline model. Left: continuation region $\mathcal C$ (shaded),
stopping region $\mathcal D$, free boundary $b$, and diagonal $\Delta$. Right:
the stopping advantage $H=V-G$, with the free boundary (solid) and $\Delta$
(dashed) overlaid. The advantage is largest near the reflecting corner and
vanishes on $\mathcal D$.}
\label{fig:regions}
\label{fig:H}
\end{figure}

Two independent checks confirm the solver. First, as noted above, the
killed-resolvent candidate equals the variational-inequality value to machine
precision. Second, we validate the value and the optimality of the computed
boundary by Monte Carlo: simulating reflected paths \eqref{eq:projected-euler}
from a state $x$, stopping each at the first entry into $\mathcal D$, and
averaging the realised discounted reward reproduces $V(x)$ to within $0.5\%$
(Table~\ref{tab:value-validation} and Figure~\ref{fig:mc}(a)). The small
positive MC bias is the standard discrete-time overshoot of the stopping
boundary. Both checks are model-level confirmations that the killed resolvent,
not a pointwise generator identity, represents the value.

\begin{table}[t]
\centering
\caption{Monte Carlo validation of the value at the verified boundary.
$\hat V$ averages $M=8\times10^4$ reflected paths stopped at $\tau_{\mathcal D}$,
$\Delta t=2\times10^{-3}$. Standard errors are below $10^{-3}$.}
\label{tab:value-validation}
\begin{tabular}{lccc}
\toprule
$x$ & $V$ (FD) & $\hat V$ (MC) & rel.\ diff. \\
\midrule
$(0,0)$    & $1.2757$ & $1.2830$ & $0.8\%$ \\
$(0.5,0.5)$& $1.3664$ & $1.3736$ & $0.8\%$ \\
$(1,1)$    & $1.5158$ & $1.5216$ & $0.8\%$ \\
$(1.5,1)$  & $1.6526$ & $1.6568$ & $0.8\%$ \\
$(2,2)$    & $2.0885$ & $2.0890$ & $0.6\%$ \\
\bottomrule
\end{tabular}
\end{table}

\begin{figure}[t]
\centering
\includegraphics[width=0.92\textwidth]{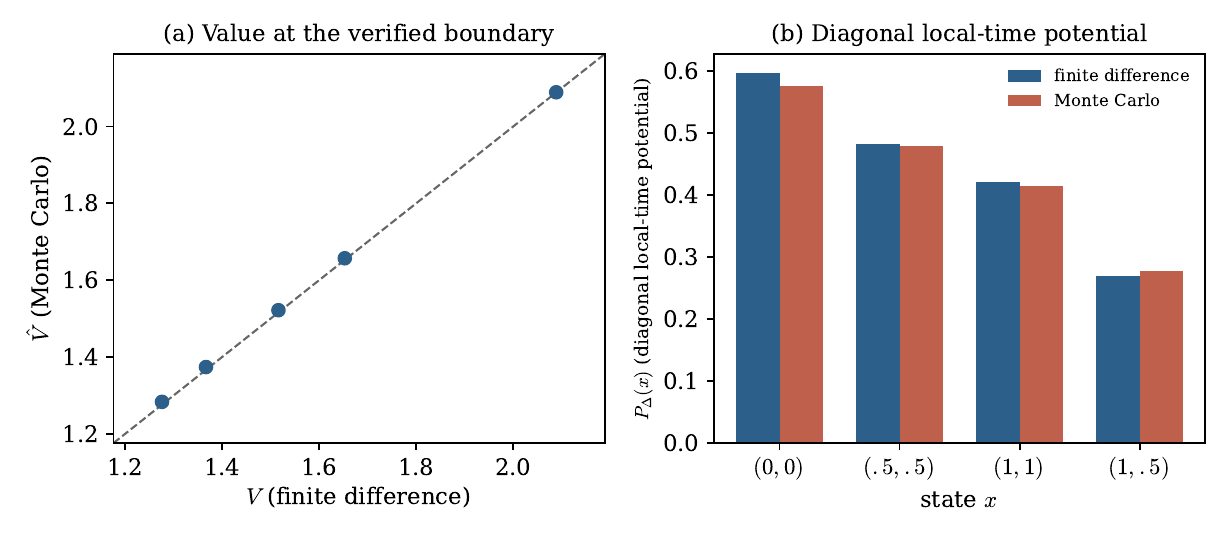}
\caption{(a) Monte Carlo value $\hat V$ at the verified boundary against the FD
value $V$; points lie on the diagonal. (b) The diagonal local-time potential
$P_\Delta$ computed deterministically (finite difference) and by the path-space
local-time estimator, at four states.}
\label{fig:mc}
\end{figure}

\subsection{Effect of the diagonal singular measure}
\label{subsec:numerics-diagonal}

This experiment implements \S\ref{subsec:diagonal-effect}. We compare the true
boundary $b_{\rm full}$, computed from the full measure-valued gain
$\Gamma=\Gamma^{\rm ac}\,dx+\Gamma^\Delta$, with the boundary $b_{\rm ac}$
obtained when the diagonal contribution \eqref{eq:Gamma-diagonal} is discarded
and only $\Gamma^{\rm ac}$ is used. Concretely, $b_{\rm ac}$ is the
self-consistent contact boundary of the ac-only candidate
$V_{\rm ac}=G-R_r^{\mathcal C}\Gamma^{\rm ac}$.

The two boundaries differ sharply (Figure~\ref{fig:diag-boundary} and
Table~\ref{tab:diagonal}). Ignoring the kink measure collapses the continuation
region toward the diagonal: at $x_1=1$ the boundary falls from
$b_{\rm full}=1.93$ to $b_{\rm ac}=1.38$, and by $x_1=1.5$ the ac-only
continuation section has almost disappeared. The mechanism is transparent in the
vertical sections of Figure~\ref{fig:slice}: the ac-only candidate $V_{\rm ac}$
dips \emph{below} the obstacle $G$ exactly along $\Delta$, violating the
domination $V\ge G$ inside the true continuation region. This is the sign effect
anticipated in Remark~\ref{rem:sign-singular-component}: the diagonal part of
$\Gamma$ is nonpositive, so omitting it systematically overestimates the
continuation gain. The relative discrepancy \eqref{eq:diagonal-discrepancy} is
\[
E_\Delta = 0.35
\]
at the baseline---a $35\%$ boundary error attributable entirely to the singular
measure.

\begin{figure}[t]
\centering
\includegraphics[width=0.49\textwidth]{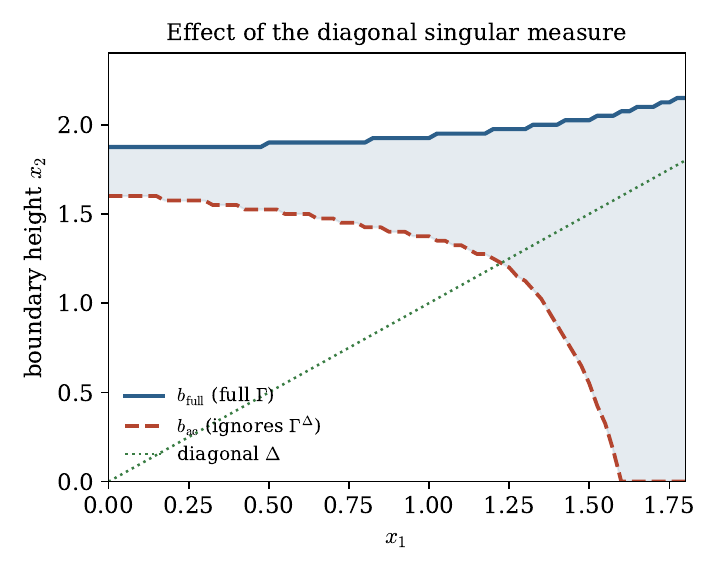}\hfill
\includegraphics[width=0.49\textwidth]{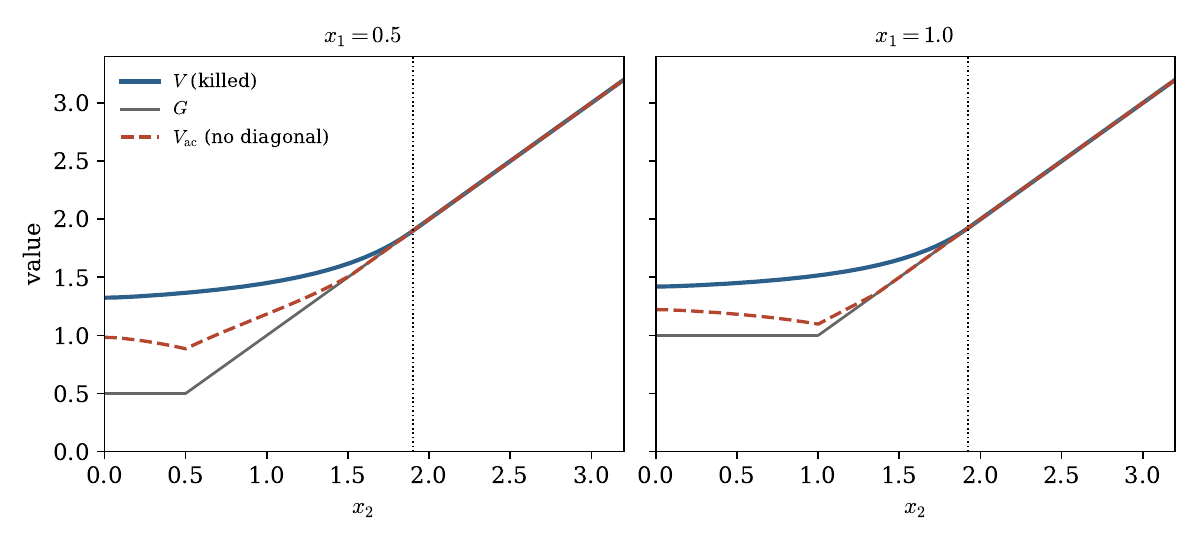}
\caption{Effect of the diagonal singular measure. Left: the true boundary
$b_{\rm full}$ (full $\Gamma$) versus $b_{\rm ac}$ (diagonal term omitted); the
ac-only continuation region collapses toward $\Delta$. Right: vertical sections
$x_2\mapsto V,\,G,\,V_{\rm ac}$ at $x_1=0.5,1.0$. The value $V$ meets $G$
smoothly at the boundary (dotted), whereas $V_{\rm ac}$ falls below $G$ at the
kink $x_2=x_1$.}
\label{fig:diag-boundary}
\label{fig:slice}
\end{figure}

\begin{table}[t]
\centering
\caption{Boundary heights with and without the diagonal measure (baseline,
$\alpha=1$ so $\Delta=\{x_2=x_1\}$).}
\label{tab:diagonal}
\begin{tabular}{cccc}
\toprule
$x_1$ & $b_{\rm full}$ & $b_{\rm ac}$ & difference \\
\midrule
$0.0$ & $1.875$ & $1.600$ & $+0.275$ \\
$0.5$ & $1.900$ & $1.525$ & $+0.375$ \\
$1.0$ & $1.925$ & $1.375$ & $+0.550$ \\
$1.5$ & $2.025$ & $0.550$ & $+1.475$ \\
\bottomrule
\end{tabular}
\end{table}

The diagonal contribution is itself the local-time potential
\[
P_\Delta(x) := V(x)-V_{\rm ac}(x)
= \tfrac12\,\mathbb E_x\!\left[\int_0^{\tau_{\mathcal D}}
e^{-rs}\,d\ell^0_s(Y)\right]\ge0,
\qquad Y=X^1-\alpha X^2,
\]
the analytic and probabilistic faces of the singular term of
Theorem~\ref{thm:measure-valued-gain}. We compute $P_\Delta$ in two independent
ways: deterministically as $V-V_{\rm ac}$, and by the Monte Carlo local-time
estimator \eqref{eq:local-time-approx}. The two agree to within a few percent
(Figure~\ref{fig:mc}(b)), e.g.\ $P_\Delta(0,0)=0.598$ (FD) versus $0.576$ (MC),
and $P_\Delta(1,1)=0.421$ versus $0.414$. This is a direct cross-validation of
the diagonal Revuz-measure / local-time identification.

Finally we vary the correlation $\rho$, which by \eqref{eq:q-constant-covariance}
controls the crossing coefficient $q$ and hence the strength of the singular
measure. As $q$ decreases the diagonal effect weakens monotonically, and as
$\rho\to1$ (so $q\to0$) it essentially vanishes (Table~\ref{tab:rho},
Figure~\ref{fig:Edelta}): $E_\Delta$ falls from $0.44$ at $q=0.75$ to $0.03$ at
$q=0.05$. This is exactly the dependence predicted by the theory---the singular
correction is proportional to the local time accumulated by $Y=X^1-\alpha X^2$,
whose quadratic variation rate is $q$.

\begin{table}[t]
\centering
\caption{Diagonal effect versus correlation. $q$ is the crossing coefficient
\eqref{eq:q-constant-covariance}; $E_\Delta$ the boundary discrepancy
\eqref{eq:diagonal-discrepancy}.}
\label{tab:rho}
\begin{tabular}{rccc}
\toprule
$\rho$ & $q$ & $\max H$ & $E_\Delta$ \\
\midrule
$-0.50$ & $0.750$ & $1.314$ & $0.441$ \\
$-0.25$ & $0.625$ & $1.296$ & $0.401$ \\
$ 0.00$ & $0.500$ & $1.276$ & $0.351$ \\
$ 0.25$ & $0.375$ & $1.253$ & $0.291$ \\
$ 0.50$ & $0.250$ & $1.225$ & $0.208$ \\
$ 0.90$ & $0.050$ & $1.150$ & $0.032$ \\
\bottomrule
\end{tabular}
\end{table}

\begin{figure}[t]
\centering
\includegraphics[width=0.6\textwidth]{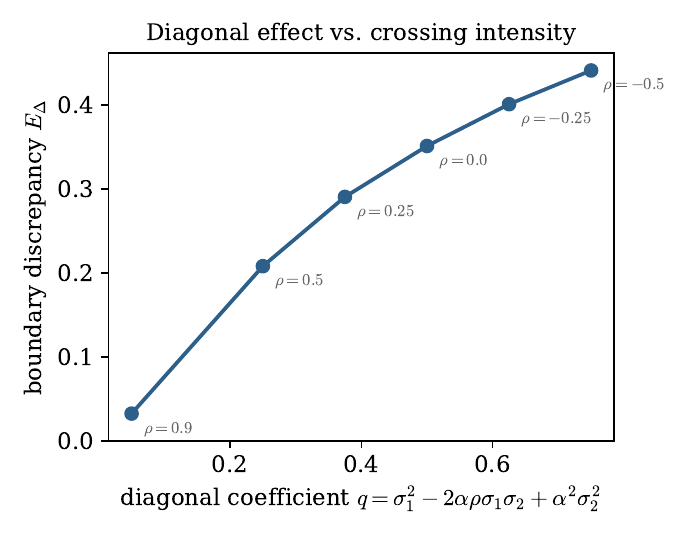}
\caption{Boundary discrepancy $E_\Delta$ as a function of the crossing
coefficient $q$ (varied through the correlation $\rho$). The diagonal effect
vanishes as $q\to0$.}
\label{fig:Edelta}
\end{figure}

\subsection{Killed versus unrestricted resolvent}
\label{subsec:numerics-unrestricted}

This experiment implements \S\ref{subsec:unrestricted-comparison} and
Proposition~\ref{prop:failure-unrestricted-resolvent}. Using the full measure,
we compare the killed potential $R_r^{\mathcal C}\Gamma$ with the unrestricted
potential $R_r^{\mathrm R}(\Gamma\mathbf 1_{\mathcal C})$ of
\eqref{eq:unrestricted-resolvent}. The mean-reverting dynamics make the
post-stopping mechanism concrete: after entering $\mathcal D$, the process is
pulled back toward $\theta$ and re-enters $\mathcal C$, so the post-stopping
occupation term \eqref{eq:post-stopping-bias} is genuinely nonzero. On the
baseline model its average over the continuation region is
\[
\frac1{|\mathcal C|}\int_{\mathcal C}\!
\big(R_r^{\mathrm R}(\Gamma\mathbf 1_{\mathcal C})-R_r^{\mathcal C}\Gamma\big)
\,dx \;=\; -5.5\times10^{-2},
\]
with pointwise values growing along $\Delta$ (Table~\ref{tab:unrestricted}).

\begin{table}[t]
\centering
\caption{Post-stopping occupation bias \eqref{eq:post-stopping-bias} of the
unrestricted resolvent, at sample states in $\mathcal C$.}
\label{tab:unrestricted}
\begin{tabular}{lc}
\toprule
$x$ & $R_r^{\mathrm R}(\Gamma\mathbf 1_{\mathcal C})-R_r^{\mathcal C}\Gamma$ \\
\midrule
$(0,0)$    & $-0.041$ \\
$(0.5,0.5)$& $-0.043$ \\
$(1,1)$    & $-0.047$ \\
$(1.5,1)$  & $-0.051$ \\
\bottomrule
\end{tabular}
\end{table}

More importantly, the failure is structural, not merely quantitative. The
unrestricted candidate $W_{\rm unres}=G-R_r^{\mathrm R}(\Gamma\mathbf 1_{\mathcal C})$
does \emph{not} satisfy the contact condition: on the entire stopping region we
find
\[
W_{\rm unres}-G \in [\,0.014,\;0.172\,] > 0 \qquad\text{on } \mathcal D,
\]
so $W_{\rm unres}$ exceeds the payoff throughout $\mathcal D$ and never touches
the obstacle there. Consequently the unrestricted formulation has \emph{no}
contact set, and the free boundary cannot be defined by the trace condition
\eqref{eq:true-trace-condition}. The killing at $\tau_{\mathcal D}$ is therefore
not a quantitative correction but a prerequisite for the boundary equation to be
well posed---precisely the content of Remark~\ref{rem:free-boundary-consequence}.

\subsection{Parameter sensitivity}
\label{subsec:numerics-sensitivity}

Finally we vary the payoff weight $\alpha$ and the mean-reversion speed $\kappa$,
following \S\ref{subsec:parameter-sensitivity}. Changing $\alpha$ rotates the
diagonal $\Delta=\{x_1=\alpha x_2\}$ and rescales the crossing coefficient
$q=\sigma_1^2+\alpha^2\sigma_2^2$ (at $\rho=0$); both the geometry and the
strength of the singular term respond (Table~\ref{tab:alpha}). Because $\alpha$
alters the boundary location and the payoff scale simultaneously, $E_\Delta$ is
not monotone in $q$ alone: the larger overall continuation value at
$\alpha=1.5$ normalises the discrepancy downward even though $q$ is larger.
Increasing the mean-reversion speed $\kappa$ enlarges the continuation value and
raises the boundary, as expected (Table~\ref{tab:kappa}), since stronger
reversion increases the value of waiting for the state to drift back toward
$\theta$.

\begin{table}[t]
\centering
\caption{Sensitivity to the payoff weight $\alpha$. ``$x_1$-extent'' is the
largest $x_1$ with a nonempty continuation section.}
\label{tab:alpha}
\begin{tabular}{ccccc}
\toprule
$\alpha$ & $q$ & $\max H$ & $b(0)$ & $E_\Delta$ \\
\midrule
$0.7$ & $0.373$ & $1.134$ & $2.025$ & $0.307$ \\
$1.0$ & $0.500$ & $1.276$ & $1.875$ & $0.351$ \\
$1.5$ & $0.813$ & $1.743$ & $1.850$ & $0.247$ \\
\bottomrule
\end{tabular}
\end{table}

\begin{table}[t]
\centering
\caption{Sensitivity to the mean-reversion speed $\kappa$.}
\label{tab:kappa}
\begin{tabular}{ccc}
\toprule
$\kappa$ & $\max H$ & $b(0)$ \\
\midrule
$0.3$ & $1.030$ & $1.650$ \\
$0.6$ & $1.276$ & $1.875$ \\
$1.0$ & $1.464$ & $1.975$ \\
\bottomrule
\end{tabular}
\end{table}

\subsection{Summary of numerical findings}
\label{subsec:numerics-summary}

The experiments confirm the three structural claims of the paper. First, the
killed-resolvent candidate $U_b=G-R_r^{\mathcal C_b}\Gamma$ coincides with the
value function---to machine precision in the deterministic solver and to $0.5\%$
in an independent path simulation---validating
Theorem~\ref{thm:killed-resolvent-representation} and the optimality of the
first-entry time. Second, the diagonal singular measure is quantitatively
decisive: discarding it moves the free boundary by $E_\Delta\approx0.35$ and
makes the candidate value violate $V\ge G$ along the kink, with the effect
scaling with the crossing coefficient $q$ exactly as the local-time
interpretation predicts. Third, the unrestricted resolvent fails structurally:
its candidate value exceeds the payoff on the whole stopping region, so it admits
no contact set and cannot define the free boundary. In all three respects the
computation mirrors the analytic structure: the boundary is governed by a killed
potential of a signed measure, not by a pointwise generator equation.

\section{Discussion and conclusion}
\label{sec:discussion-conclusion}

This paper developed a measure-valued variational framework for a reflected
optimal stopping problem with nonsmooth max-type payoff. The central point is
that the difficulty is not only multidimensionality, but the simultaneous
presence of normal reflection, a nonsmooth obstacle, and a signed singular
stopping gain. For
\[
        G(x_1,x_2)=x_1\vee \alpha x_2 ,
\]
the distributional generator contains a singular component supported on the
diagonal
\[
        \Delta=\{x_1=\alpha x_2\}.
\]
Consequently, the stopping gain
\[
        \Gamma=c+rG-\mathcal LG
\]
is a signed Radon measure rather than an ordinary function. This shows that the
associated obstacle problem must be treated as a measure-valued reflected
variational inequality.

The first main contribution was to compute the singular diagonal part of
\(\Gamma\) explicitly. The formula
\[
        \Gamma^\Delta(dx)
        =
        -
        \frac{(1,-\alpha)a(x)(1,-\alpha)^\top}
        {2\sqrt{1+\alpha^2}}\,
        \sigma_\Delta(dx)
\]
identifies the analytic source of the kink correction and links it to the local
time of \(X^1-\alpha X^2\) at zero. This term cannot be recovered by applying
the generator pointwise on the two smooth regions of the payoff.

The second contribution was to prove that the correct potential representation
uses the killed resolvent:
\[
        V(x)=G(x)-R_r^{\mathcal C}\Gamma(x).
\]
The killing is essential because the optimal stopping problem terminates when
the process first enters the stopping set. The unrestricted reflected resolvent
generally counts future occupation of the continuation region after stopping
has already occurred, and therefore does not represent the value. This
distinction is not merely technical; it changes the free-boundary equation.

The third contribution was to formulate the free boundary through a
continuation-side killed-potential trace condition. Under explicit monotonicity
hypotheses on the stopping advantage \(V-G\), the stopping set admits an
epigraph representation
\[
        \mathcal D=\{(x_1,x_2):x_2\ge b(x_1)\}.
\]
For a candidate boundary \(b\), the natural boundary equation is
\[
        \lim_{\substack{x\to z\\x\in\mathcal C_b}}
        R_r^{\mathcal C_b}\Gamma(x)=0,
        \qquad z\in\partial\mathcal C_b .
\]
Thus the classical smooth-fit or scalar integral-equation viewpoint is
replaced by a nonlocal trace condition for a killed potential of a signed
measure.

The fourth contribution was a verification theorem for locally Lipschitz
candidate boundaries. The theorem separates boundary generation from boundary
certification: a curve obtained from the killed-resolvent trace equation is
optimal only if the associated candidate value
\[
        U_b=G-R_r^{\mathcal C_b}\Gamma
\]
satisfies majorization, contact, strict continuation, reflected Neumann
compatibility, admissible growth, and measure-superharmonicity. This converts
the analysis from a conditional representation into a verification principle
for optimization over stopping times.

The computational results support the analytic conclusions. The comparison
between the full measure-valued stopping gain and its absolutely continuous
part shows that omitting the diagonal singular measure can substantially shift
the free boundary. The comparison between killed and unrestricted resolvents
shows that post-stopping occupation creates a structural bias and may destroy
the contact condition. Parameter experiments further illustrate how the payoff
weight, covariance structure, and crossing coefficient influence the boundary
through the singular term.

Several extensions remain open. One direction is to weaken the monotonicity
assumptions used to obtain the epigraph representation. Another is to establish
convergence of the killed-resolvent Picard iteration under checkable analytic
conditions. A further direction is to treat oblique reflection, more general
polyhedral domains, or payoffs given by the maximum of several affine
functions, where the stopping gain would contain singular measures on multiple
interfaces. These extensions would broaden the proposed framework from the
present two-dimensional max-payoff model to a wider class of reflected
stochastic control and free-boundary problems.

In conclusion, the paper shows that reflected optimal stopping with nonsmooth
payoff is naturally governed by a measure-valued obstacle problem. The correct
free-boundary equation is not a pointwise generator condition and not an
unrestricted resolvent identity, but a killed-resolvent trace condition for a
signed stopping measure. This provides a variational, probabilistic, and
computational framework for certifying optimal stopping boundaries in reflected
diffusion models with nonsmooth rewards.

\bibliography{reference}

\end{document}